\newtheorem{thm}{Theorem}
\newtheorem{defn}[thm]{Definition}
\newtheorem{lem}[thm]{Lemma}
\newtheorem{cor}[thm]{Corollary}
\newtheorem{rem}[thm]{Remark}
\newtheorem{fact}[thm]{Fact}
 \font\xviiroman=cmr17
\def\udot{\mathbin{\ooalign{$\cup$\crcr
   \hfil\raise 8pt\hbox{\xviiroman.}\hfil\crcr}}}
\def\bigudotx#1#2{\mathop{\smash{\ooalign{$#1\bigcup$\crcr
   \hfil\raise 8pt\hbox{#2}\hfil\crcr}}\vphantom{\bigcup}}}
\newcommand{\dotcup}{\udot}
\newcommand{\F}{\mathcal{F}}
\newcommand{\K}{\mathcal{K}}
\newcommand{\dist}{{\rm Dist}}
\newcommand{\forb}{{\rm Forb}}
\newcommand{\ovchi}{\overline{\chi}}
\newcommand{\chib}{\chi_B}
\newcommand{\vk}{V(K)}
\newcommand{\vw}{{\rm VW}}
\newcommand{\vb}{{\rm VB}}
\newcommand{\ew}{{\rm EW}}
\newcommand{\eb}{{\rm EB}}
\newcommand{\vws}{\left|{\rm VW}\right|}
\newcommand{\vbs}{\left|{\rm VB}\right|}
\newcommand{\ews}{\left|{\rm EW}\right|}
\newcommand{\ebs}{\left|{\rm EB}\right|}
\newcommand{\vwk}{{\rm VW}(K)}
\newcommand{\vbk}{{\rm VB}(K)}
\newcommand{\ewk}{{\rm EW}(K)}
\newcommand{\ebk}{{\rm EB}(K)}
\newcommand{\egk}{{\rm EG}(K)}
\newcommand{\dw}{{\rm d}_{\rm W}}
\newcommand{\db}{{\rm d}_{\rm B}}
\newcommand{\dg}{{\rm d}_{\rm G}}
\newcommand{\mk}{{\bf M}_K}
\newcommand{\one}{{\bf 1}}
\newcommand{\zero}{{\bf 0}}
\newcommand{\x}{{\bf x}}
\newcommand{\hh}{\mathcal{H}}
\newcommand{\E}{\mathbb{E}}
\newcommand{\textdef}{\textbf}
\def\ed{{\textit{ed}}}
\title{On the computation of edit distance functions}
\author{Ryan R. Martin}
\address{Department of Mathematics, Iowa State University, Ames, Iowa 50011}
\email{rymartin@iastate.edu}
\thanks{This author's research was partially supported by NSF grant DMS-0901008 and by an Iowa State University Faculty Professional Development grant.}
\subjclass[2010]{Primary 05C35; Secondary 05C80}
\keywords{edit distance, hereditary property, symmetrization, split graph, colored regularity graph}
\begin{document}
\begin{abstract}
The edit distance between two graphs on the same labeled vertex set is the size of the symmetric difference of the edge sets.  The edit distance function of the hereditary property, $\mathcal{H}$, is a function of $p\in[0,1]$ and is the limit of the maximum normalized distance between a graph of density $p$ and $\mathcal{H}$.

This paper uses the symmetrization method of Sidorenko in order to compute the edit distance function of various hereditary properties.  For any graph $H$, ${\rm Forb}(H)$ denotes the property of not having an induced copy of $H$.  We compute the edit distance function for ${\rm Forb}(H)$, where $H$ is any split graph, and the graph $H_9$, a graph first used to describe the difficulties in computing the edit distance function.  \end{abstract}
\maketitle

\section{Introduction}
For two graphs $G$ and $G'$ on the same labeled vertex set of size $n$, the \textdef{normalized edit distance} between them is denoted $\dist(G,G')$ and satisfies
$$ \dist(G,G')=\left|E(G)\triangle E(G')\right|/\binom{n}{2} . $$

A \textdef{property} of graphs is simply a set of graphs.  A \textdef{hereditary property} is a set of graphs that is closed under isomorphism and the taking of induced subgraphs. The normalized edit distance between a graph $G$ and a property $\mathcal{H}$ is denoted $\dist(G,\mathcal{H})$ and satisfies
$$ \dist(G,\mathcal{H})=\min\left\{\dist(G,G') : V(G)=V(G'), G'\in\mathcal{H}\right\} . $$
In this paper, all properties will be hereditary.

\subsection{The edit distance function}
The \textdef{edit distance function} of a property $\hh$, denoted $\ed_{\hh}(p)$, measures the maximum distance of a density-$p$ graph from a hereditary property. Formally,
$$ \ed_{\hh}(p) = \sup_{n\rightarrow\infty}\max\left\{\dist(G,\hh) : |V(G)|=n, |E(G)|=\left\lfloor p{\textstyle\binom{n}{2}}\right\rfloor\right\} . $$
Balogh and the author~\cite{BM} use a result of Alon and Stav~\cite{AS1} to show that the supremum can be made into a limit, as long as the property $\hh$ is hereditary.
\begin{equation}
   \ed_{\hh}(p) = \lim_{n\rightarrow\infty}\max\left\{\dist(G,\hh) : |V(G)|=n, |E(G)|=\left\lfloor p{\textstyle\binom{n}{2}}\right\rfloor\right\} . \label{eq:ghhdef}
\end{equation}

Moreover, the result from~\cite{BM} establishes that if $\hh$ is hereditary then we also have
$$ \ed_{\hh}(p) = \lim_{n\rightarrow\infty}\E\left[\dist(G(n,p),\hh)\right] . $$
That is, the maximum edit distance to a hereditary property for a density-$p$ graph is the same, asymptotically, as that of the Erd\H{o}s-R\'enyi random graph $G(n,p)$ (see Chapter 10 of~\cite{AS:TPM}).

For any nontrivial hereditary property $\hh$ (that is, one that is not finite), the function $\ed_{\hh}(p)$ is continuous and concave down~\cite{BM}.  Hence, it achieves its maximum.  The maximum value of $\ed_{\hh}(p)$ is denoted $d_{\hh}^*$. The value of $p$ at which this maximum occurs is denoted $p_{\hh}^*$.

It should be noted that, for some hereditary properties, the edit distance function may achieve its maximum over a closed interval rather than a single point. In such cases, we will also let $p_{\hh}^*$ denote the interval over which the given edit distance function achieves its maximum.

\subsection{Symmetrization}
In order to compute edit distance functions, we use the method of symmetrization, introduced by Sidorenko~\cite{Sid} and discussed in~\cite{Martin} as a way to compute edit distance functions.  We will discuss what symmetrization is and how it is used in Section~\ref{sec:symmetrization}. It uses some properties of quadratic programming, first applied by Marchant and Thomason~\cite{MT}.

Some results on the edit distance function can be found in a variety of papers \cite{R,AKM,AM,AS1,AS2,AS3,AS4,MT,MM}.  Much of the background to this paper can be found in a paper by Balogh and the author~\cite{BM}.  Terminology and proofs of supporting lemmas that are suppressed here can be found in~\cite{Martin}.

\subsection{Main results}
Given a graph $H$, $\forb(H)$ is the set of all graphs that have no induced copy of $H$. Clearly $\forb(H)$ is a hereditary property for any graph $H$ and such a property is called a \textit{principal hereditary property}.  It is easy to see that, for any hereditary property $\hh$, there exists a family of graphs $\F(\hh)$ such that $\hh=\bigcap_{H\in\F(\hh)}\forb(H)$.

\subsubsection{Split graphs}
The main results of this paper are Theorem~\ref{thm:split} and Theorem~\ref{thm:h9}.

A \textdef{split graph} is a graph whose vertex set can be partitioned into one clique and one independent set.  If $H$ is a split graph on $h$ vertices with independence number $\alpha$ and clique number $\omega$, then $\alpha+\omega\in\{h,h+1\}$. The value of $p^*_{\forb(H)}$ and of $d^*_{\forb(H)}$ had been obtained for $H=K_{1,3}$, the claw, by Alon and Stav~\cite{AS2} and for graphs of the form $K_a+E_b$ (an $a$-clique with $b$ isolated vertices) by Balogh and the author~\cite{BM}.

For the $\forb(K_a+E_b)$ result, the proof required a weighted version of Tur\'an's theorem. The symmetrization method, however, is much more powerful and we can use it to obtain Theorem~\ref{thm:split}, which gives the value of the edit distance function for all $\forb(H)$, where $H$ is a split graph.
\begin{thm}\label{thm:split}
   Let $H$ be a split graph that is neither complete nor empty, with independence number $\alpha$ and clique number $\omega$.  Then,
   \begin{equation}\label{eq:split} \ed_{\forb(H)}(p)=\min\left\{\frac{p}{\omega-1},\frac{1-p}{\alpha-1}\right\} . \end{equation}
\end{thm}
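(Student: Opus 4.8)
The plan is to work in the colored regularity graph (CRG) framework and localization developed in the cited papers, where $\ed_{\forb(H)}(p)=\min_K f_K(p)$, the minimum ranging over CRGs $K$ admitting no colored homomorphic image of $H$, and $f_K(p)=\min_{\vecp}\vecp^{\mathsf T}M_K(p)\vecp$ over weight vectors $\vecp$ on the simplex; here $M_K(p)$ has diagonal $p$ (resp.\ $1-p$) on white (resp.\ black) vertices and off-diagonal $p$, $1-p$, $0$ on white, black, gray edges. For the upper bound I would produce two all-gray CRGs: let $K_1$ be $\omega-1$ white vertices with every edge gray and $K_2$ be $\alpha-1$ black vertices with every edge gray. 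Split graphs are perfect, so $\chi(H)=\omega$ and $\chi(\overline{H})=\omega(\overline{H})=\alpha$; a homomorphism $H\to K_1$ is a proper $(\omega-1)$-coloring of $H$ and $H\to K_2$ a proper $(\alpha-1)$-coloring of $\overline{H}$, both impossible. Thus $K_1,K_2$ are admissible, and since the uniform weighting minimizes $\sum p_v^2$ one gets $f_{K_1}(p)=p/(\omega-1)$ and $f_{K_2}(p)=(1-p)/(\alpha-1)$, establishing $\ed_{\forb(H)}(p)\le\min\{p/(\omega-1),(1-p)/(\alpha-1)\}$.

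For the matching lower bound I must show $f_K(p)\ge\min\{p/(\omega-1),(1-p)/(\alpha-1)\}$ for \emph{every} admissible $K$, and the plan is to extract two structural consequences of $H\not\to K$. First, perfectness again forbids $K$ from containing $\omega$ white vertices that are pairwise joined by gray edges (such a gray clique would absorb a proper $\omega$-coloring of $H$) and, dually, $\alpha$ pairwise-gray black vertices. Second, and decisively, an admissible $K$ can have \emph{no} gray edge between a white and a black vertex: because $H$ is split its vertex set is one clique plus one independent set, so a single white--black gray edge already admits $H$ by mapping the independent part onto the white endpoint, the clique part onto the black endpoint, and all cross adjacencies onto the gray edge. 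Consequently every white--black edge of an admissible $K$ is white or black, hence of strictly positive cost.

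The remaining and hardest step is to turn these combinatorial facts into the quadratic bound. I would pass to the support of an optimal weighting (a still-admissible sub-CRG on which the optimum is interior, so $M_K(p)\vecp=f_K(p)\one$). One must resist the temptation to simply discard the white--black interaction: writing $A=p/(\omega-1)$ and $B=(1-p)/(\alpha-1)$, dropping the cross terms only gives $f_K(p)\ge AB/(A+B)\le\min\{A,B\}$, which is too weak. The real content is that admissibility forbids any \emph{free} (gray) mixing across the two color classes, so the positive cost of the white--black edges drives the optimal weight onto a single class, where the gray-clique bounds $\omega-1$ and $\alpha-1$ then recover $A$ or $B$. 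Rigorously controlling these cross terms together with possible non-gray edges \emph{within} a color class is the main obstacle; I expect to resolve it through localization, reducing to finitely many interior-optimum configurations and ultimately to all-gray CRGs, for which the observation above (any all-gray CRG using both a white and a black vertex already admits the split graph $H$) forces a pure white clique on at most $\omega-1$ vertices or a pure black clique on at most $\alpha-1$ vertices, whose minimum value is precisely $\min\{p/(\omega-1),(1-p)/(\alpha-1)\}$.
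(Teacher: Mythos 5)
Your upper bound is correct and is exactly the paper's: the all-gray CRGs $K(\omega-1,0)$ and $K(0,\alpha-1)$, together with $\chi(H)=\omega$ and $\chi(\overline{H})=\alpha$ for split graphs, give $\ed_{\forb(H)}(p)\le\min\{p/(\omega-1),(1-p)/(\alpha-1)\}$. Your opening structural observations for the lower bound are also sound. The gap is the final step of your plan: ``reducing \ldots\ ultimately to all-gray CRGs'' is not a reduction that can be carried out, and it is precisely where the theorem's difficulty lives. CRGs in $\K(\forb(H))$ need not be all-gray. The paper's route is to pass to $p$-cores: by Lemma~\ref{lem:cores}, in a $p$-core every edge between differently colored vertices is gray, so (by the same embedding observation you make) a $p$-core avoiding a split graph must have monochromatic vertex set; for $p<1/2$ the hard case is all vertices black with edges \emph{white or gray}. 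These black-vertex, white-edge CRGs genuinely avoid $H$ for many split graphs --- for instance when $H$ is a clique-star such as a double star --- and they are not equivalent to, nor dominated by, any all-gray CRG; one must prove directly that their $g$ function is at least $\min\{p/(\omega-1),(1-p)/(\alpha-1)\}$. Your proposal never engages with these configurations: your quantitative discussion concerns only the white--black cross terms (which the $p$-core machinery eliminates immediately), not the non-gray edges \emph{within} the black class, which are the real obstruction.

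Closing that gap is essentially the entire body of the paper's proof. Using the localization identity $\dg(v)=\frac{p-g}{p}+\frac{1-2p}{p}\x(v)$ for black vertices and the bound $\x(v)\le g/(1-p)$, the paper bounds gray degrees of maximum-weight vertices and splits into the cases $\alpha+\omega=h+1$ and $\alpha+\omega=h$; the latter further splits according to whether $H$ is a clique-star. The clique-star case requires a bespoke embedding lemma (Fact~\ref{fact:embed}: a greedy embedding of the stars of $H$ into a gray $(\omega-1)$-clique whose vertices have large gray neighborhoods) together with a delicate counting argument (Fact~\ref{fact:bigdeg}) showing that if $g_K(p)$ were below the claimed bound, such a configuration must exist inside $K$, forcing $H\arrows K$. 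None of this follows from interior-optimality of the quadratic program or from any finite enumeration of configurations; as written, your lower bound does not go through.
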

It is a trivial result (see, e.g., \cite{Martin}) that $\ed_{\forb(K_{\omega})}(p)=p/(\omega-1)$ and $\ed_{\forb(E_{\alpha})}(p)=(1-p)/(\alpha-1)$. So, we know the edit distance function for all split graphs.

Corollary~\ref{cor:split} follows immediately from Theorem~\ref{thm:split} (and the following comment on trivial split graphs), giving the value of the maximum of the edit distance function and the value at which it occurs.
\begin{cor}\label{cor:split}
   Let $H$ be a split graph with independence number $\alpha$ and clique number $\omega$.  Then, $\left(p_{\hh}^*,d_{\hh}^*\right)=\left(\frac{\omega-1}{\alpha+\omega-2},\frac{1}{\alpha+\omega-2}\right)$.
\end{cor}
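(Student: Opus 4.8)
The plan is to treat Corollary~\ref{cor:split} as a short optimization of the piecewise-linear edit distance function, unified across all three regimes. First I would observe that combining Theorem~\ref{thm:split} with the two classical formulas $\ed_{\forb(K_\omega)}(p)=p/(\omega-1)$ and $\ed_{\forb(\overline{K_\alpha})}(p)=(1-p)/(\alpha-1)$ recalled above, one has in every case
\[
   \ed_{\forb(H)}(p)=\min\left\{\frac{p}{\omega-1},\frac{1-p}{\alpha-1}\right\},
\]
with the convention that a term with zero denominator is read as $+\infty$: for complete $H$ one has $\alpha=1$ and only the first term survives, while for empty $H$ one has $\omega=1$ and only the second survives.

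Next I would analyze this function on $[0,1]$. The first term $f_1(p)=p/(\omega-1)$ is nonnegative and increasing from $0$, while the second $f_2(p)=(1-p)/(\alpha-1)$ is nonnegative and decreasing to $0$ at $p=1$. Their pointwise minimum is concave, so its maximum is attained either at an endpoint or at the unique point where $f_1(p)=f_2(p)$; since $f_1$ increases and $f_2$ decreases, that crossing point is exactly the maximizer whenever it lies in $[0,1]$.

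Solving $\frac{p}{\omega-1}=\frac{1-p}{\alpha-1}$ yields $p(\alpha-1)=(1-p)(\omega-1)$, hence $p(\alpha+\omega-2)=\omega-1$ and
\[
   p^*=\frac{\omega-1}{\alpha+\omega-2},\qquad d^*=\frac{p^*}{\omega-1}=\frac{1}{\alpha+\omega-2}.
\]
I would close by confirming $p^*\in[0,1]$ in every case and checking that the degenerate regimes collapse correctly: for complete $H$ (where $\alpha=1$) the formula returns $p^*=1$, matching the maximum of the increasing $p/(\omega-1)$, and for empty $H$ (where $\omega=1$) it returns $p^*=0$, matching the maximum of the decreasing $(1-p)/(\alpha-1)$.

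The computation itself presents no genuine obstacle; the only point requiring any care is the bookkeeping of the degenerate complete and empty cases, so that the single closed-form expression for $(p^*,d^*)$ is justified uniformly rather than by splitting into three separate arguments.
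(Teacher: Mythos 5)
Your proposal is correct and matches the paper's (implicit) argument: the paper simply combines Theorem~\ref{thm:split} with the trivial formulas for complete and empty $H$ and maximizes the resulting concave minimum of an increasing and a decreasing linear function, which is exactly your computation. Your explicit treatment of the degenerate cases $\alpha=1$ and $\omega=1$ via the $+\infty$ convention is a clean way of writing out what the paper leaves to the reader.
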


To understand the importance of the upcoming Theorem~\ref{thm:h9}, we must define the notion of colored regularity graphs.

\subsubsection{Colored regularity graphs}

If $S$ and $T$ are sets, then $S\dotcup T$ denotes the disjoint union of $S$ and $T$.  If $v$ and $w$ are adjacent vertices in a graph, we denote the edge between them to be $vw$.

A \textdef{colored regularity graph (CRG)}, $K$, is a simple complete graph, together with a partition of the vertices into white and black $\vk=\vwk\dotcup\vbk$ and a partition of the edges into white, gray and black, $E(K)=\ewk\dotcup\egk\dotcup\ebk$.  We say that a graph $H$ embeds in $K$, (writing $H\mapsto K$) if there is a function $\varphi: V(H)\rightarrow\vk$ so that if $h_1h_2\in E(H)$, then either $\varphi(h_1)=\varphi(h_2)\in\vbk$ or $\varphi(h_1)\varphi(h_2)\in\ebk\cup\egk$ and if $h_1h_2\not\in E(H)$, then either $\varphi(h_1)=\varphi(h_2)\in\vwk$ or $\varphi(h_1)\varphi(h_2)\in\ewk\cup\egk$.

There are certain kinds of CRGs that occur frequently: A \textdef{gray-edge CRG} is a CRG for which all of the edges are gray. A \textdef{white-vertex CRG} is a CRG for which all the vertices are white and a \textdef{black-vertex CRG} is a CRG for which all vertices are black.

For a hereditary property of graphs, $\hh$, we denote $\K(\hh)$ to be the subset of CRGs, $K$, such that no forbidden graph maps into $K$.  That is, if $\F(\hh)$ is defined to be the minimal set of graphs so that $\hh=\bigcap_{H\in\F(\hh)}\forb(H)$, then $\K(\hh)=\{K : H\not\mapsto K, \forall H\in\F(\hh)\}$.  A CRG $K'$ is said to be \textdef{a sub-CRG of $K$} if $K'$ can be obtained by deleting vertices of $K$.

\subsubsection{The graph $H_9$}

\begin{figure}[ht]
{\hfill\includegraphics[width=2in]{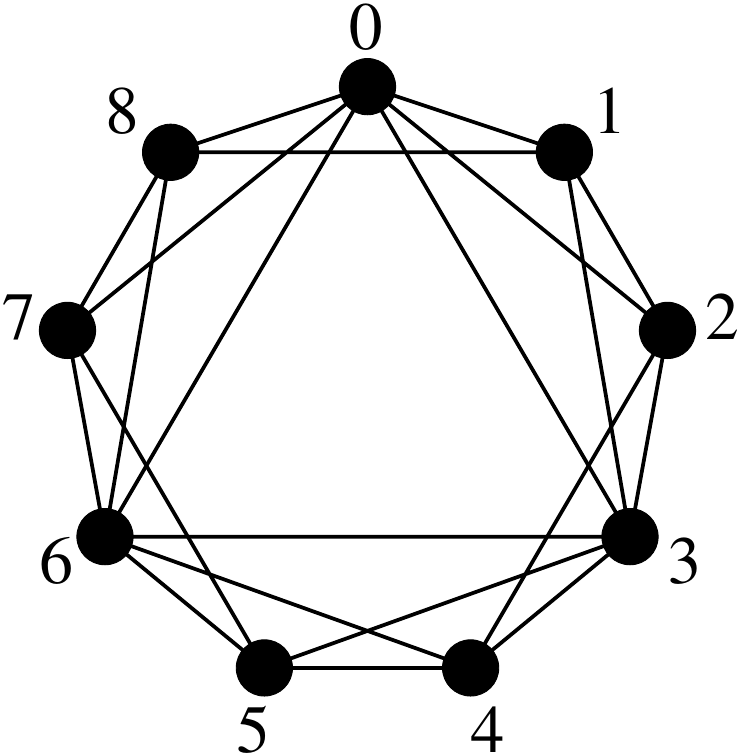}\hfill}
\caption{The graph $H_9$.}\label{fig:h9}
\end{figure}
The graph, $H_9$, as drawn in Figure~\ref{fig:h9}, was given in \cite{BM} as an example of a hereditary property $\hh=\forb(H_9)$ such that $d_{\hh}^*$ cannot be determined only by gray-edge CRGs, \`a la Theorem~\ref{thm:fandg}.

For any hereditary property $\hh$, the number of gray-edge CRGs in $\K(\hh)$ is finite. Hence, it would be ideal if $\ed_{\hh}(p)$ or at least $d_{\hh}^*$ could be determined by them. However, the relevant CRG in~\cite{BM} had 4 white vertices, 5 gray edges and a single black edge.

In~\cite{BM} only an upper bound of $\min\left\{\frac{p}{3},\frac{p}{2+2p},\frac{1-p}{2}\right\}$ is provided for $\ed_{\forb(H_9)}(p)$.  The symmetrization method not only shows that the CRGs used in~\cite{BM} were insufficient to compute the edit distance function, but using it leads directly to the discovery of a new CRG, one which was necessary to define the edit distance function given in Theorem~\ref{thm:h9}.
\begin{thm}\label{thm:h9}
   Let $H_9$ be the graph in Figure~\ref{fig:h9}.  Then,
   \begin{equation}\label{eq:h9}
      \ed_{\forb(H_9)}(p)=\min\left\{\frac{p}{3},\frac{p}{1+4p},\frac{1-p}{2}\right\} .
   \end{equation}
   Consequently, $\left(p_{\forb(H_9)}^*,d_{\forb(H_9)}^*\right) = \left(\frac{1+\sqrt{17}}{8},\frac{7-\sqrt{17}}{16}\right)$.
\end{thm}
The new CRG used to determine the function in \eqref{eq:h9} has 5 white vertices, 8 gray edges and two non-incident black edges.

\begin{figure}[ht]
\begin{center}
\includegraphics[width=3in]{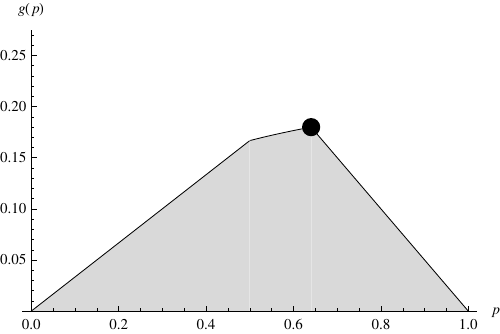}
\caption{Plot of $\ed_{\forb(H_9)}(p)=\min\{p/3,p/(1+4p),(1-p)/2\}$.  The point $(p^*,d^*)=\left(\frac{1+\sqrt{17}}{8},\frac{7-\sqrt{17}}{16}\right)$ is indicated.}
\label{fig:ploth9}
\end{center}
\end{figure}

\subsection{Structure of the paper}

The rest of the paper is organized as follows: Section~\ref{sec:defns} gives some of the general definitions for the edit distance function, such as colored regularity graphs. Section~\ref{sec:pcores} defines and categorizes so-called $p$-core colored regularity graphs, which were introduced by Marchant and Thomason~\cite{MT}. Section~\ref{sec:symmetrization} describes the method we use, called symmetrization. Section~\ref{sec:split} proves Theorem~\ref{thm:split} regarding split graphs. Section~\ref{sec:h9} proves Theorem~\ref{thm:h9} regarding the graph $H_9$. 

\section{Background and basic facts}
\label{sec:defns}
For every CRG, $K$, we associate two functions.  The function $f$ is a linear function of $p$ and $g$ is found by weighting the vertices.  Let $V(K)=\{v_1,\ldots,v_k\}$ be a set of $k$ vertices, and let $\mk(p)$ be a $k\times k$ matrix such that the entries are as follows:
$$ [\mk(p)]_{ij}=\left\{\begin{array}{ll}
                           p, & \mbox{if $i\neq j$ and $v_iv_j\in\ewk$ or $i=j$ and $v_i\in\vwk$;} \\
                           1-p, & \mbox{if $i\neq j$ and $v_iv_j\in\ebk$ or $i=j$ and $v_i\in\vbk$;} \\
                           0, & \mbox{if $v_iv_j\in\egk$.}
                        \end{array}\right. $$
Then, we can express the $f$ and $g$ functions over the domain $p\in[0,1]$ as follows, with $\vw=\vwk$, $\vb=\vbk$, $\ew=\ewk$, $\eb=\ebk$ and $\one$ to be the vector with all entries equal to one:
\begin{align}
   f_K(p) &= \frac{1}{k^2}\left[p\left(\vws+2\ews\right)+(1-p)\left(\vbs+2\ebs\right)\right] \label{eq:fdef} \\
   g_K(p) &= \left\{\begin{array}{rrcl}
   \min & \multicolumn{3}{l}{\x^T\mk(p)\x} \\
   \mbox{s.t.} & \x^T\one & = & 1 \\
   & \x & \geq & \zero . \end{array}\right. \label{eq:gdef}
\end{align}
Note that $f_K(p)=\left(\frac{1}{k}\one\right)^T\mk(p)\left(\frac{1}{k}\one\right)$.  Since $\x=\frac{1}{k}\one$ is a feasible solution to \eqref{eq:gdef}, $f_K(p)\geq g_K(p)$.

\begin{thm}\label{thm:fandg}
   For any nontrivial hereditary property $\hh$,
   $$ \ed_{\hh}(p)=\inf_{K\in\K(\hh)}f_K(p)=\inf_{K\in\K(\hh)}g_K(p)=\min_{K\in\K(\hh)}g_K(p) . $$
\end{thm}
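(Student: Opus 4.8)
The plan is to prove the two equalities by sandwiching $\ed_{\hh}(p)$ between the infima $\inf_{K\in\K(\hh)}g_K(p)$ and $\inf_{K\in\K(\hh)}f_K(p)$, reading the stated limits as these infima (attained in the limit over CRGs of growing order). Since $g_K(p)\le f_K(p)$ for every $K$ by the remark following (\ref{eq:gdef}), it is immediate that $\inf_K g_K(p)\le\inf_K f_K(p)$. Hence it suffices to establish the two outer inequalities $\ed_{\hh}(p)\le\inf_K g_K(p)$ and $\ed_{\hh}(p)\ge\inf_K f_K(p)$, which together force all three quantities to coincide. Throughout I would use the fact, quoted earlier, that the maximum distance is achieved asymptotically by $G(n,p)$, so I may work with $\dist(G(n,p),\hh)$ in place of the worst-case density-$p$ graph.

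First I would prove the construction bound $\ed_{\hh}(p)\le g_K(p)$ for every $K\in\K(\hh)$. Fix $K$ with vertices $v_1,\dots,v_k$ and let $\x=(x_1,\dots,x_k)$ be an optimal weight vector for (\ref{eq:gdef}). Partition $V(G(n,p))$ into parts $V_1,\dots,V_k$ with $|V_i|\approx x_i n$, and edit $G(n,p)$ to a graph $G'$ by making each $V_i$ with $v_i\in\vwk$ an independent set and each $V_i$ with $v_i\in\vbk$ a clique, and making each pair $(V_i,V_j)$ empty, complete, or untouched according as $v_iv_j\in\ewk$, $\ebk$, or $\egk$. Then $G'\arrows K$, so $G'$ has no induced copy of any $H\in\F(\hh)$ and hence $G'\in\hh$. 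Since $G(n,p)$ has edge-density $p$ in every cell, the expected number of edits, after normalizing by $\binom{n}{2}$, is asymptotically $\x^T\mk(p)\x=g_K(p)$. Taking the infimum over $K$ gives $\ed_{\hh}(p)\le\inf_K g_K(p)\le\inf_K f_K(p)$.

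The heart of the argument is the matching lower bound $\ed_{\hh}(p)\ge\inf_K f_K(p)$, which I would obtain through the (multicolored) Szemer\'edi regularity lemma. Take a near-optimal edit $G(n,p)\to G'$ with $G'\in\hh$ and apply the regularity lemma to $G$ and $G'$ simultaneously to produce an equitable partition into $k$ parts that is regular for both graphs. I would then read off a CRG $K$ on $k$ vertices: color $v_i$ white or black according to whether $G'$ restricted to $V_i$ is, after cleaning, sparse or dense, and color the pair $v_iv_j$ white, black, or gray according to whether the $(V_i,V_j)$-density in $G'$ is near $0$, near $1$, or bounded away from both. Because the partition is equitable, each of the $k^2$ cells carries weight $\approx 1/k$, and since every cell of $G(n,p)$ has density $p$, the cell-by-cell edit cost is at least $\bigl(\tfrac1k\one\bigr)^T\mk(p)\bigl(\tfrac1k\one\bigr)-o(1)=f_K(p)-o(1)$ (white cells cost $\approx p$, black $\approx 1-p$, gray cost at least $0$). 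Combining the two bounds yields $\inf_K f_K(p)\le\ed_{\hh}(p)\le\inf_K g_K(p)\le\inf_K f_K(p)$, so equality holds throughout.

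The main obstacle is verifying that the CRG $K$ extracted from the regular partition actually lies in $\K(\hh)$, i.e.\ that no $H\in\F(\hh)$ embeds into it. This requires an embedding (counting) lemma: if some forbidden $H$ embedded into $K$ via $\varphi$, then selecting representative vertices from the corresponding parts --- using regularity to guarantee the required adjacencies and non-adjacencies across gray pairs and within gray-colored parts --- would produce an induced copy of $H$ in $G'$, contradicting $G'\in\hh$. Making this precise, together with the bookkeeping of the error terms from irregular pairs and from pairs whose density is not cleanly $0$, $p$, or $1$, is where the real work lies; the regularity parameters must be chosen so that all these losses are absorbed into the $o(1)$ term. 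The remaining ingredients --- continuity and concavity of $\ed_{\hh}$, and the reduction to $G(n,p)$ --- are quoted from the earlier results and need no further argument.
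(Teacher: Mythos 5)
The paper itself gives no proof of this statement---it is quoted from the reference [BM]---so your attempt can only be compared with the argument given there (which follows Alon and Stav). Your overall architecture matches that proof: the upper bound $\ed_{\hh}(p)\le g_K(p)$ by partitioning $G(n,p)$ according to an optimal weight vector of $K$ and editing cell by cell (the resulting graph maps into $K$, hence lies in $\hh$), and the lower bound via a regularity partition of a nearly-closest graph $G'\in\hh$, an associated CRG, and an embedding lemma certifying that this CRG lies in $\K(\hh)$. The upper-bound half of your sketch is essentially complete and correct.

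The genuine gap is in how you extract the CRG in the lower bound. You color a vertex $v_i$ white or black ``according to whether $G'$ restricted to $V_i$ is, after cleaning, sparse or dense,'' and you then charge the diagonal cells cost $\approx p$ or $\approx 1-p$ in order to reach $f_K(p)-o(1)$. But no regularity partition makes parts internally homogeneous: for $\hh=\forb(K_3)$ and $G'=K_{n/2,n/2}\in\hh$, every part of an equitable partition has internal density $\approx 1/2$, so it is neither sparse nor dense, and no refinement of the partition changes this. The proof in [BM] resolves the point differently: vertex colors are assigned by a Ramsey argument (black if $G'[V_i]$ contains a clique of size $t$, white if it contains an independent set of size $t$; every sufficiently large part contains one or the other), which is exactly what the embedding lemma requires---but then the diagonal cells cannot be charged any edit cost at all. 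Instead one observes that the vertex terms of $f_K$ contribute at most $k\cdot(1/k^2)=1/k$, which is negligible as the number of parts grows; this is precisely why the statement is a limit over ever-larger CRGs rather than a minimum at fixed size. Without this idea, your cost accounting on the diagonal cells is false as stated. A second, smaller omission: since $G'$ depends on the random graph $G$, the assertion that ``every cell of $G(n,p)$ has density $p$'' must hold uniformly over all partitions into $k$ parts (Chernoff plus a union bound over all pairs of linear-sized vertex sets), not merely for one partition fixed in advance; this uniform quasirandomness is what legitimizes reading off the cell costs after the partition has been chosen using $G'$.
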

The first two equalities are due to Balogh and the author~\cite{BM}. The last, that the infimum of the $g$ functions can be replaced by a minimum, is implicit from Marchant and Thomason~\cite{MT}, although their setting is not edit distance.

\subsection{Basic observations on $\ed_{\hh}(p)$}

The following is a summary of basic facts about the edit distance function.  Item (\ref{it:bcn}) comes from Alon and Stav~\cite{AS1}.  Item (\ref{it:concon}) comes from \cite{BM}. The other items are trivial consequences of the definition. The chromatic number of $\mathcal{H}$, denoted $\chi(\mathcal{H})$ or just $\chi$, where the context is clear, is $\min\{\chi(H) : H \in \mathcal{F}(\mathcal{H})\}$. The complementary chromatic number of $\mathcal{H}$, denoted $\ovchi(\mathcal{H})$ or $\ovchi$, is $\min\{\chi(\overline{H}) : H \in \mathcal{F}(\mathcal{H})\}$. The binary chromatic number is
$$ \max\{k+1 : \exists\, r, s, r+s=k, H\not\mapsto K(r,s), \forall H\in\mathcal{F}(\hh)\} , $$
where $K(r,s)$ denotes the CRG with $r$ white vertices and $s$ black vertices and all edges gray.
The complement of hereditary property $\hh$, denoted $\overline{\hh}$, is $\bigcap_{\overline{H}\in\F(\hh)}\forb(H)$. Observe that $\overline{\hh}$ is not the complement of $\hh$ as a set.

\begin{thm}\label{thm:basic}
   Let $\hh$ be a nontrivial hereditary property with chromatic number $\chi$, complementary chromatic number $\ovchi$, binary chromatic number $\chib$ and edit distance function $\ed_{\hh}(p)$.
   \begin{enumerate}
      \item If $\chi>1$, then $\ed_{\hh}(p)\leq p/(\chi-1)$. \label{it:chi}
      \item If $\ovchi>1$, then $\ed_{\hh}(p)\leq (1-p)/(\ovchi-1)$. \label{it:ovchi}
      \item $\ed_{\hh}(1/2)=1/(2(\chib-1))$. \label{it:bcn}
      \item $\ed_{\hh}(p)$ is continuous and concave down. \label{it:concon}
      \item $\ed_{\hh}(p)=\ed_{\overline{\hh}}(1-p)$. \label{it:comp}
   \end{enumerate}
\end{thm}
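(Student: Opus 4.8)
The plan is to derive all five items from the quoted theorem of Balogh and the author \cite{BM}, which expresses $\ed_{\hh}(p)$ as the infimum of $f_K(p)$ (equivalently of $g_K(p)$) over $K\in\K(\hh)$, together with a few explicit CRGs and one complementation symmetry. Items (\ref{it:chi}), (\ref{it:ovchi}) and (\ref{it:comp}) are formal consequences of exhibiting the right CRG or of complementing; item (\ref{it:concon}) is a soft property of infima of affine functions; and item (\ref{it:bcn}) is the only place requiring a genuine extremal computation together with the Alon--Stav lower bound.

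For item (\ref{it:chi}), take the CRG $K_r$ on $r=\chi-1$ vertices, all white and with every edge gray. A graph embeds into $K_r$ exactly when it is properly $(\chi-1)$-colorable; since by the definition of $\chi$ no minimal forbidden graph of $\hh$ is $(\chi-1)$-colorable, we have $K_r\in\K(\hh)$. Evaluating (\ref{eq:fdef}) with $\vbs=\ews=\ebs=0$ and $\vws=r$ gives $f_{K_r}(p)=p/(\chi-1)$, and since $\ed_{\hh}(p)\le f_K(p)$ for every $K\in\K(\hh)$, item (\ref{it:chi}) follows. For the complementation identity (\ref{it:comp}), let $\overline{K}$ be obtained from $K$ by interchanging white and black on both vertices and edges while fixing the gray edges; then $H\arrows K$ if and only if $\overline{H}\arrows\overline{K}$, so $\overline{K}\in\K(\overline{\hh})$, and comparing (\ref{eq:fdef}) for the two shows $f_{\overline{K}}(1-p)=f_K(p)$, since the substitution $p\mapsto 1-p$ interchanges exactly the white and black contributions. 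Taking the infimum over $K$ yields $\ed_{\overline{\hh}}(1-p)\le\ed_{\hh}(p)$, and symmetry gives equality. Item (\ref{it:ovchi}) is then item (\ref{it:chi}) applied to $\overline{\hh}$, combined with (\ref{it:comp}) and the identity $\ovchi(\hh)=\chi(\overline{\hh})$.

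Item (\ref{it:concon}) is immediate from the representation: each $f_K$ is affine in $p$ by (\ref{eq:fdef}), so $\ed_{\hh}$, being a pointwise infimum of affine (hence concave) functions, is concave on $[0,1]$; continuity then follows as in \cite{BM}. For item (\ref{it:bcn}) I would work with $g_K$ at $p=1/2$. There $\mk(1/2)=\frac{1}{2}(I+A)$, where $A$ is the adjacency matrix of the graph $G$ on $\vk$ whose edges are the non-gray edges of $K$---both white and black vertices contribute $1/2$ on the diagonal, so the white/black split is invisible at $p=1/2$. A Motzkin--Straus-type identity gives $\min\{\x^T(I+A)\x : \x^T\one=1,\ \x\ge\zero\}=1/\alpha(G)$, where $\alpha(G)$ is the independence number of $G$, equivalently the size of a largest gray clique of $K$; hence $g_K(1/2)=1/(2\alpha(G))$. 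Minimizing over $\K(\hh)$ therefore amounts to maximizing the size of a gray clique that is itself a legal sub-CRG, and since a gray clique with $r$ white and $s$ black vertices admits an embedding of $H$ exactly when $H$ can be partitioned into $r$ independent sets and $s$ cliques, the largest admissible $r+s$ is by definition $\chib-1$. This gives $\ed_{\hh}(1/2)=1/(2(\chib-1))$.

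The main obstacle is the lower bound in item (\ref{it:bcn}). The upper bound $\ed_{\hh}(1/2)\le 1/(2(\chib-1))$ is just the explicit gray-clique CRG, but the matching lower bound---that every $K\in\K(\hh)$ genuinely satisfies $g_K(1/2)\ge 1/(2(\chib-1))$, equivalently that $G(n,1/2)$ really is this far from $\hh$---rests on the Motzkin--Straus evaluation above and ultimately on the theorem of Alon and Stav \cite{AS1}. By contrast, items (\ref{it:chi}), (\ref{it:ovchi}), (\ref{it:concon}) and (\ref{it:comp}) are purely formal once the Balogh--Martin representation is in hand.
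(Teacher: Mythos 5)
Your proposal is sound, but there is nothing in the paper to compare it against step-by-step: Theorem~\ref{thm:basic} is stated as a compendium of known facts, with item~(\ref{it:bcn}) cited to Alon and Stav~\cite{AS1}, item~(\ref{it:concon}) to~\cite{BM}, and the remaining items treated as standard consequences recorded in~\cite{BM,Martin}; the paper offers no proof of its own. Your reconstruction from the representation $\ed_{\hh}(p)=\inf\{f_K(p):K\in\K(\hh)\}=\inf\{g_K(p):K\in\K(\hh)\}$ is correct and matches the cited sources in spirit: $K(\chi-1,0)\in\K(\hh)$ with $f_{K(\chi-1,0)}(p)=p/(\chi-1)$ gives~(\ref{it:chi}); the color-swap involution satisfies ${\bf M}_{\overline{K}}(1-p)=\mk(p)$ and carries $\K(\hh)$ bijectively onto $\K(\overline{\hh})$, giving~(\ref{it:comp}) and hence~(\ref{it:ovchi}); concavity is inherited by a pointwise infimum of the affine functions $f_K$. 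Your Motzkin--Straus evaluation for~(\ref{it:bcn}) is also right: on the simplex one has $\x^T(\I+\A)\x=1-\x^T\A_{\overline{G}}\x$, so $g_K(1/2)=1/(2\alpha)$ where $\alpha$ is the largest gray clique of $K$, and the hidden step that makes your lower bound go through -- worth saying explicitly -- is that $\K(\hh)$ is closed under taking sub-CRGs, so the gray clique of any $K\in\K(\hh)$ is itself a CRG $K(r,s)\in\K(\hh)$ with $r+s\leq\chib-1$ by the definition of the binary chromatic number, whence $g_K(1/2)\geq 1/(2(\chib-1))$ for every $K\in\K(\hh)$. Two fine points: continuity of a concave function is automatic only on the open interval, so your deferral to~\cite{BM} at the endpoints $p=0,1$ is genuinely needed rather than cosmetic; and your closing attribution is apt, since the one non-formal ingredient behind all five items is the hard direction of the representation theorem itself -- that $G(n,p)$ actually attains $\inf_K g_K(p)$ -- which is precisely the Alon--Stav/Balogh--Martin contribution, everything else being formal once that is in hand.
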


\section{The $p$-cores}
\label{sec:pcores}
From Theorem~\ref{thm:fandg} we have that, for any hereditary property $\hh$ and $p\in [0,1]$, there is a CRG, $K\in\K(\hh)$ such that $\ed_{\hh}(p)=g_K(p)$.  This is found by looking at so-called $p$-cores.  A CRG, $K$, is a \textdef{$p$-core CRG}, or simply a $p$-core, if $g_{K}(p)<g_{K'}(p)$ for all nontrivial sub-CRGs $K'$ of $K$.  Marchant and Thomason~\cite{MT} prove that
$$ \ed_{\hh}(p)=\min\left\{g_K(p) : K\in\K(\hh)\mbox{ and $K$ is $p$-core}\right\} . $$

Upper bounds for the edit distance function of $\hh$ are found by simply exhibiting some CRGs $K\in\K(\hh)$ and computing $g_K(p)$ by means of \eqref{eq:gdef}. The symmetrization method obtains lower bounds for $\ed_{\hh}(p)$.  The main tools are Lemmas~\ref{lem:cores} and~\ref{lem:symm}, found in~\cite{Martin}. We have already seen much of the theoretical underpinnings.

For a vertex, $v$, in a CRG, $K$, we say that $v'$ is a \textdef{gray [white,black] neighbor of $v$} if the edge $vv'$ has color gray [white,black].  We use $N_G(v)$, $N_W(v)$, $N_B(v)$ to denote the set of gray, white and black neighbors of $v$.

Given $K$, a $p$-core, there is a unique optimum weight vector, $\x$, with all entries positive, that is a solution to \eqref{eq:gdef}. For any vertex $v\in V(K)$, $\dg(v)$ denotes the sum of the weights of the gray neighbors of $v$ under $\x$, $\dw(v)$ the sum of the white neighbors (including $v$ itself if the color of $v$ is white) and $\db(v)$ the sum of the black neighbors (again, including $v$ itself if the color of $v$ is black). Consequently, $\dg(v)+\dw(v)+\db(v)=1$.

The fundamental concept is that we may, in many cases, assume the vertices are monochromatic (say, black) and all edges are either white or gray. The sizes of the gray neighborhoods are a function of the weight $\x(v)$. We formalize the observations below:
\begin{lem}
   \label{lem:cores}
   Let $\hh$ be a nontrivial hereditary property and $p\in (0,1)$, $\K(\hh)$ the set of CRGs defined by $\hh$.  Then,
   \begin{enumerate}
      \item $\ed_{\hh}(p)=\min\{g_K(p) : K\in\K(\hh)\mbox{ and $K$ is $p$-core}\}$. \label{it:equivcore}
      \item If $p\leq 1/2$ and $K$ is a $p$-core CRG, then $K$ has no black edges and white edges can only be incident to black vertices. \label{it:smpcore}
      \item If $p\geq 1/2$ and $K$ is a $p$-core CRG, then $K$ has no white edges and black edges can only be incident to white vertices. \label{it:lgpcore}
      \item If $\x$ is the optimal weight function of a $p$-core CRG $K$, then for all $v\in V(K)$,
          $g_K(p)=p\dw(v)+(1-p)\db(v)$.
          \label{it:regularize}
   \end{enumerate}
\end{lem}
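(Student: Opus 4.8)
The plan is to treat the four parts in increasing order of difficulty, leaning throughout on the quadratic program (\ref{eq:gdef}) and its first-order optimality conditions. Part (\ref{it:equivcore}) needs no new work: it is exactly the statement of Marchant and Thomason recalled in Section~\ref{sec:pcores}, so I would simply invoke \cite{MT}. For part (\ref{it:regularize}) I would use the fact, quoted in the text, that a $p$-core has a unique optimal weight vector $\x$ with all entries strictly positive. The nonnegativity constraints are then inactive, so the Karush--Kuhn--Tucker conditions for minimizing $\x^T\mk(p)\x$ subject to $\x^T\one=1$ reduce to $\mk(p)\x=\lambda\one$ for a scalar $\lambda$; dotting with $\x$ gives $\lambda=\x^T\mk(p)\x=g_K(p)$, whence
\[ \left(\mk(p)\x\right)_v = g_K(p)\qquad\text{for every }v\in V(K). \]
Reading off the $v$-th coordinate and grouping the entries of $\mk(p)$ by color (the gray neighbors contribute $0$) turns the left side into $p\,\dw(v)+(1-p)\,\db(v)$, which is the claimed identity.

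The heart of the matter is parts (\ref{it:smpcore}) and (\ref{it:lgpcore}); by the black--white / $p\leftrightarrow 1-p$ duality of CRGs, the same symmetry underlying Theorem~\ref{thm:basic}(\ref{it:comp}), it suffices to prove (\ref{it:smpcore}) and apply it to the complementary core to obtain (\ref{it:lgpcore}). The main tool is a sliding argument: if $\z^T\one=0$, then $\x+t\z$ stays on the constraint plane and, since $\mk(p)\x=g_K(p)\one$ kills the linear term, its objective value is exactly $g_K(p)+t^2\,\z^T\mk(p)\z$. Hence if $\z^T\mk(p)\z\le 0$ for some nonzero such $\z$, I can slide $t$ (in the sign that first drives a coordinate of the interior point $\x$ to $0$) without increasing the objective, producing a feasible weight vector of value $\le g_K(p)$ supported on a proper sub-CRG $K'$, contradicting the strict inequality $g_K(p)<g_{K'}(p)$ defining a $p$-core. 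So $\z^T\mk(p)\z>0$ for every nonzero $\z\perp\one$; taking $\z=\mathbf{e}_a-\mathbf{e}_b$ gives the pairwise inequality
\[ [\mk(p)]_{aa}+[\mk(p)]_{bb} > 2\,[\mk(p)]_{ab}\qquad(a\neq b). \]
For $p\le 1/2$ each diagonal entry is at most $1-p$, so a black edge ($[\mk(p)]_{ab}=1-p$) violates it, giving no black edges; and a white edge between two white vertices gives $[\mk(p)]_{aa}=[\mk(p)]_{bb}=[\mk(p)]_{ab}=p$, again a violation, so no white edge has two white endpoints.

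The remaining step, upgrading ``no white--white white edge'' to ``every white edge has both endpoints black'' (equivalently, white vertices meet only gray edges), is the one I expect to be the main obstacle, since the pairwise curvature test is satisfied with strict slack $1-2p$ by a white edge running between a white and a black vertex. Here I would abandon curvature and push on the first-order identity of part (\ref{it:regularize}). Once black edges are excluded, $\db(v)=0$ at every white vertex $v$, so that identity forces $\dw(v)=g_K(p)/p$ simultaneously at all white vertices. Substituting the resulting relations $\x_a=g_K(p)/p-\sum_{c\sim_W a}\x_c$ into the stationarity equation $\left(\mk(p)\x\right)_b=g_K(p)$ at a black vertex $b$ that is white-adjacent to a white vertex yields a linear relation among the positive black weights; in the transparent case where $b$ has a single white neighbor it reads $(1-2p)\x_b+p\sum_{c\sim_W b,\,c\in\vbk}\x_c=0$, which is impossible for $p<1/2$ and positive weights. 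The genuine work is to show that the general configuration (a black vertex white-adjacent to several white vertices) likewise over-determines the positive solution and forces a zero coordinate, contradicting the all-positivity of the $p$-core optimum; this is the quadratic-programming bookkeeping carried out in~\cite{Martin}.
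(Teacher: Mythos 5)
Two preliminary remarks. First, the paper itself does not prove Lemma~\ref{lem:cores}: it quotes it from~\cite{Martin}, with part~(\ref{it:equivcore}) due to~\cite{MT}, so your attempt is being measured against those sources rather than against an argument in the text. Second, most of what you do is right: part~(\ref{it:equivcore}) by citation is exactly what the paper does; your KKT derivation of part~(\ref{it:regularize}) is correct (up to an immaterial factor of $2$ in the multiplier); the complementation duality reducing~(\ref{it:lgpcore}) to~(\ref{it:smpcore}) is sound; and your sliding argument, giving $\z^T\mk(p)\z>0$ for all nonzero $\z$ with $\z^T\one=0$ and then testing $\z=\mathbf{e}_a-\mathbf{e}_b$, is the standard (Marchant--Thomason) way to exclude black edges and white edges with two white endpoints.

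The gap is precisely the step you flag, and it is worse than you suggest. First, your ``transparent case'' identity has the wrong sign. Subtracting the stationarity equations at a white vertex $a$ and a black vertex $b$ joined by a white edge (using that black edges are already excluded) gives
\[
(1-2p)\,\x(b)+p\sum_{c\sim_W b,\ c\neq a}\x(c)\;=\;p\sum_{c\sim_W a,\ c\neq b}\x(c),
\]
so both sides are nonnegative and there is no contradiction unless the \emph{white vertex} $a$ has no white neighbor other than $b$; the relevant hypothesis is on $a$'s white neighborhood, not on $b$'s, and nothing forces it. Second, the general case cannot be closed by ``quadratic-programming bookkeeping'' from stationarity: the first-order system together with positivity and \emph{all} pairwise curvature inequalities can be satisfied by the forbidden configuration. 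For example, take $p=2/5$, a white vertex $a$, black vertices $b_1,b_2,b_3$, white edges $ab_1,ab_2,b_1b_3,b_2b_3$, and gray edges $ab_3,b_1b_2$. Then $\x=(3,4,4,2)/13$ (in the order $a,b_1,b_2,b_3$) solves $\mk(p)\x=\tfrac{22}{65}\one$ with all entries positive, and every pair satisfies $[\mk]_{uu}+[\mk]_{vv}>2[\mk]_{uv}$. This CRG fails to be a $p$-core only because of a three-vertex direction: $\z=2\mathbf{e}_a-\mathbf{e}_{b_1}-\mathbf{e}_{b_2}$ has $\z^T\mk(p)\z=-2/5<0$. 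So excluding white edges at white vertices genuinely requires the core property tested against multi-vertex variations (equivalently, against sub-CRGs beyond single-vertex merges), which is the nontrivial content of the cited argument; and deferring exactly that step to~\cite{Martin} is circular here, since Lemma~\ref{lem:cores} is the statement being quoted from~\cite{Martin} in the first place.
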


\section{Computing edit distance functions using symmetrization}
\label{sec:symmetrization}
The overall idea is that we need only consider $p$-core CRGs and their special structure, then a great deal of information can be obtained by focusing on a single vertex.

Lemma~\ref{lem:symm} has all of the elements to express $\dg(v)$ for any vertex $v$ in a $p$-core CRG.  It is often useful to focus on the gray neighborhood of vertices.
\begin{lem}[Symmetrization]\label{lem:symm}
   Let $p\in (0,1)$ and $K$ be a $p$-core CRG with optimal weight function $\x$.
   \begin{enumerate}
      \item If $p\leq 1/2$, then, $\x(v)=g_K(p)/p$ for all $v\in\vw(K)$ and
      $$ \dg(v)=\frac{p-g_K(p)}{p}+\frac{1-2p}{p}\x(v) , \qquad\mbox{for all $v\in\vb(K)$.} $$ \label{it:symmsmp}
      \item If $p\geq 1/2$, then $\x(v)=g_K(p)/(1-p)$ for all $v\in\vb(K)$ and
      $$ \dg(v)=\frac{1-p-g_K(p)}{1-p}+\frac{2p-1}{1-p}\x(v) , \qquad\mbox{for all $v\in\vw(K)$.} $$  \label{it:symmlgp}
   \end{enumerate}
\end{lem}

\begin{cor}\label{cor:xbound}
   Let $p\in (0,1)$ and $K$ be a $p$-core CRG with optimal weight function $\x$.
   \begin{enumerate}
      \item If $p\leq 1/2$, then $\x(v)\leq g_K(p)/(1-p)$ for all $v\in\vb(K)$. \label{it:xbound0}
      \item If $p\geq 1/2$, then $\x(v)\leq g_K(p)/p$ for all $v\in\vw(K)$. \label{it:xbound1}
   \end{enumerate}
\end{cor}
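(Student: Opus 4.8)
The plan is to read both bounds directly off the regularization identity in Lemma~\ref{lem:cores}(\ref{it:regularize}), using nothing more than the bookkeeping convention that a vertex's own weight is counted in its same-color neighborhood sum. Recall that for every $v\in V(K)$ we have $g_K(p)=p\dw(v)+(1-p)\db(v)$, that the constraint $\x\geq\zero$ forces all weights to be nonnegative, and that by definition $\db(v)$ includes the term $\x(v)$ whenever $v$ is black, while $\dw(v)$ includes $\x(v)$ whenever $v$ is white. The strategy is simply to discard the nonnegative cross term in this identity and then use the self-weight to lower-bound the surviving same-color degree.

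First I would handle case (\ref{it:xbound0}), where $p\leq 1/2$ and $v\in\vb(K)$. Since $v$ is black, $\db(v)\geq\x(v)$, and since $p\geq 0$ and $\dw(v)\geq 0$ the term $p\dw(v)$ is nonnegative, so the identity gives
$$ g_K(p)=p\dw(v)+(1-p)\db(v)\geq (1-p)\db(v)\geq (1-p)\x(v) . $$
Dividing by $1-p>0$ yields $\x(v)\leq g_K(p)/(1-p)$, as required. Case (\ref{it:xbound1}) is entirely symmetric: for $p\geq 1/2$ and $v\in\vw(K)$, the whiteness of $v$ forces $\dw(v)\geq\x(v)$, and discarding the nonnegative term $(1-p)\db(v)$ gives
$$ g_K(p)=p\dw(v)+(1-p)\db(v)\geq p\dw(v)\geq p\x(v) , $$
so that $\x(v)\leq g_K(p)/p$ after dividing by $p>0$. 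Alternatively, one could deduce case (\ref{it:xbound1}) from case (\ref{it:xbound0}) by passing to the complementary CRG and invoking the symmetry $\ed_{\hh}(p)=\ed_{\overline{\hh}}(1-p)$ from Theorem~\ref{thm:basic}(\ref{it:comp}).

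I do not anticipate any genuine obstacle here; the corollary is essentially a one-line consequence of Lemma~\ref{lem:cores}(\ref{it:regularize}). The only point that requires care is the convention that $\dw$ and $\db$ each count the vertex itself, since it is precisely this self-inclusion that makes $\x(v)$ available to bound the relevant same-color degree from below. Notably, the localization formulas of Lemma~\ref{lem:local} are not needed for this argument, though one could obtain the same bound less directly by combining $\dg(v)\geq 0$ with those explicit expressions for $\dg(v)$.
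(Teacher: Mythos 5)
Your main argument is correct, and it is essentially the derivation the paper intends: the corollary appears with no proof of its own, as an immediate consequence of the preceding lemmas, and reading both bounds off the identity $g_K(p)=p\dw(v)+(1-p)\db(v)$ of Lemma~\ref{lem:cores}(\ref{it:regularize}) together with the self-inclusion convention for $\db$ and $\dw$ is the natural route; in fact your argument never uses the hypotheses $p\leq 1/2$, resp.\ $p\geq 1/2$, so it proves slightly more than is stated. One correction to your closing aside, though: combining $\dg(v)\geq 0$ with the formulas of Lemma~\ref{lem:local} does \emph{not} recover the corollary, because for $p\leq 1/2$ and $v\in\vb(K)$ the coefficient $\frac{1-2p}{p}$ is nonnegative, so $\dg(v)\geq 0$ only yields a \emph{lower} bound on $\x(v)$; the localization route instead needs the complementary inequality $\dg(v)\leq 1-\x(v)$ (which is again just $\db(v)\geq\x(v)$ in disguise), whence
$$ \frac{p-g_K(p)}{p}+\frac{1-2p}{p}\x(v)\leq 1-\x(v) $$
rearranges to $(1-p)\x(v)\leq g_K(p)$. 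Since that aside is not load-bearing, your proof stands as written.
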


\begin{rem}
From this point forward in the paper, if $K$ is a CRG under consideration and $p$ is fixed, $\x(v)$ will denote the weight of $v\in V(K)$ under the optimal solution of the quadratic program in equation \eqref{eq:gdef} that defines $g_K$.
\end{rem}

The notion of a component is natural in a CRG:
\begin{defn}
   A sub-CRG, $K'$, of a CRG, $K$, is a \textdef{component} if it is maximal with respect to the property that, for all $v,w\in V(K')$, there exists a path, consisting of white and black edges, entirely within $K'$.
\end{defn}
The components of a CRG are equivalence classes of the vertex set and are, therefore, disjoint. From~\cite{Martin}, it is useful to note that the $g$ function of a CRG can be computed from the $g$ functions of its components. This results from the fact that the matrix $\mk(p)$ in \eqref{eq:gdef} is block-diagonal if the CRG has more than one component.

\begin{thm}\label{thm:components}
   Let $K$ be a CRG with components $K^{(1)},\ldots,K^{(\ell)}$.  Then
   $$ \left(g_K(p)\right)^{-1}=\sum_{i=1}^{\ell}\left(g_{K^{(i)}}(p)\right)^{-1} . $$
\end{thm}

The simplest CRGs are those whose edges are gray. Let $K(w,b)$ denote the CRG with $w$ white vertices, $b$ black vertices and all edges gray. A direct corollary of Theorem~\ref{thm:components} is as follows:
\begin{cor}\label{cor:components}
Let $w$ and $b$ be nonnegative integers not both zero.
$$ g_{K(w,b)}(p)=\left(\frac{w}{p}+\frac{b}{1-p}\right)^{-1} . $$
\end{cor}

\section{$\forb(H)$, $H$ a split graph}
\label{sec:split}
We need to define a special class of graphs.  For $\omega\geq 2$ and a nonnegative integer vector $(\omega;a_0,a_1,\ldots,a_{\omega})$, a \textdef{$(\omega;a_0,a_1,\ldots,a_{\omega})$-clique-star}\footnote{We get the notation from Hung, Sys{\l}o, Weaver and West~\cite{HSWW}. Barrett, Jepsen, Lang, McHenry, Nelson and Owens~\cite{BJLMNO} define a clique-star, but it is a different type of graph.} is a graph $G$ such that $V(G)$ is partitioned into $A$ and $W$.  The set $A$ induces an independent set, the set $W=\{w_1,\ldots,w_{\omega}\}$ induces a clique and for $i=1,\ldots,\omega$, vertex $w_i$ is adjacent to a distinct set of $a_i+1$ leaves in $A$ and there are $a_0$ independent vertices. Note that this implies that $\sum_{i=0}^{\omega}a_i=\alpha-\omega$.

Colloquially, a clique-star can be partitioned into stars and independent sets such that the centers of the stars are connected by a clique and there are no other edges.  (If one of the stars is $K_2$, one of the endvertices is designated to be the center.)  Proving that Theorem~\ref{thm:split} is true is much more difficult in the case where either $H$ or its complement is a clique-star.

\subsection{Proof of Theorem~\ref{thm:split}}
Recall that $H$ is a split graph with independence number $\alpha$ and clique number $\omega$. We will let $h=|V(H)|$. Since we assume that $H$ is neither complete nor empty, $\alpha,\omega\geq 2$.  Because $\ed_{\forb(H)}(p)=\ed_{\forb(\overline{H})}(1-p)$ and $\alpha(H)=\omega(\overline{H})$, proving Theorem~\ref{thm:split} for $H$ also proves the theorem for $\overline{H}$.  Thus, we may assume that $\omega\leq\alpha$.

The following fact is well-known:
\begin{fact}\label{fact:chromsplit}
   If $H$ is a split graph, then it is a perfect graph.  In particular, its chromatic number is its clique number.  In notation, $\chi(H)=\omega(H)$. Consequently, $\chi(\overline{H})=\alpha(H)$.
\end{fact}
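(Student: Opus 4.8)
The plan is to prove the two equalities separately, first establishing $\chi(H)=\omega(H)$ and then deducing the statement about the complement. The inequality $\chi(H)\ge\omega(H)$ is immediate, since the vertices of a maximum clique must all receive distinct colors in any proper coloring. The real work is the reverse inequality $\chi(H)\le\omega(H)$, which I would prove by exhibiting an explicit proper coloring with $\omega$ colors built directly from the split structure. (One could instead invoke that split graphs are chordal, hence perfect, but I prefer a self-contained coloring argument in keeping with the elementary flavor here.)

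First I would fix a split partition $V(H)=C\cup I$, where $C$ induces a clique and $I$ induces an independent set, and set $t=|C|$. Assign the $t$ vertices of $C$ the distinct colors $1,\ldots,t$. Every vertex of $I$ has all of its neighbors inside $C$, so a vertex $v\in I$ can safely be given the color of any vertex of $C$ that is \emph{not} adjacent to $v$: that color appears on none of $v$'s neighbors (the colors on $C$ being distinct), and any other vertex of $I$ sharing a color with $v$ is non-adjacent to $v$. Thus the only vertices that cannot be colored this way are those $v\in I$ adjacent to \emph{every} vertex of $C$; call this set $D$. I would color all of $D$ with a single new color $t+1$, which is proper because $D\subseteq I$ is independent and no vertex of $C$ uses color $t+1$.

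The key point that makes this coloring use exactly $\omega$ colors is a structural observation about $D$. Any clique $C'$ with $|C'|>|C|$ meets the independent set $I$ in at most one vertex, hence contains all of $C$, forcing $C'=C\cup\{v\}$ for some $v\in D$; conversely each $v\in D$ gives such a clique $C\cup\{v\}$ of size $t+1$. Therefore $\omega=t$ when $D=\emptyset$ and $\omega=t+1$ when $D\ne\emptyset$, and in both cases the coloring above uses exactly $\omega$ colors. The one subtlety to handle carefully is precisely this dichotomy: verifying that two distinct members of $D$ cannot combine into a clique of size $t+2$ (they are non-adjacent), so that a nonempty $D$ raises $\omega$ by exactly one and never more. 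This is the step I expect to be the main obstacle, as everything else is routine.

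Finally I would deduce $\chi(\overline{H})=\alpha(H)$ by applying the first part to the complement. Complementing $H$ turns the clique $C$ into an independent set and the independent set $I$ into a clique, so $\overline{H}$ is again a split graph; moreover $\omega(\overline{H})=\alpha(H)$. Invoking $\chi=\omega$ for the split graph $\overline{H}$ then yields $\chi(\overline{H})=\omega(\overline{H})=\alpha(H)$, completing the argument.
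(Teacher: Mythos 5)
Your proof is correct, but it is organized differently from the paper's. The paper anchors its coloring at a maximum clique $W$: for each vertex $a\notin W$ it finds a nonneighbor $w\in W$, and it covers $V(H)$ by the sets consisting of each $w\in W$ together with its nonneighbors, asserting these sets are independent; this gives $\chi(H)\leq |W|=\omega$ with no case analysis and no need to relate $\omega$ to the size of the clique side of the split partition. You instead anchor at the split partition $(C,I)$ itself, extend the coloring of $C$ across $I$, and prove the dichotomy $\omega\in\{|C|,|C|+1\}$ governed by whether the set $D$ of vertices of $I$ dominating $C$ is empty. Your route is longer, but it buys rigor at exactly the point the paper glosses over: the paper's claim that ``$w$ and its nonneighbors form an independent set'' is automatic when $w$ lies on the clique side of the partition (or is adjacent to all of it), which holds if the maximum clique $W$ is chosen to contain $C$, but it can fail for an arbitrary maximum clique --- in the path $i_1c_1c_2i_2$ (a split graph with $C=\{c_1,c_2\}$, $I=\{i_1,i_2\}$), the maximum clique $\{c_1,i_1\}$ contains $w=i_1$, whose nonneighbors $c_2$ and $i_2$ are adjacent. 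Your explicit treatment of $D$, including the observation that two vertices of $D$ are nonadjacent so $\omega$ rises by at most one, is immune to this issue. The complement step is identical in both arguments.
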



An immediate consequence of Fact~\ref{fact:chromsplit} is that $H$ cannot be embedded into $K(\omega-1,0)$ and $K(0,\alpha-1)$ and so, by Corollary~\ref{cor:components},
\begin{equation}\label{eq:splitUB} \ed_{\forb(H)}(p)\leq\min\left\{g_{K(\omega-1,0)}(p),g_{K(0,\alpha-1)}(p)\right\} = \min\left\{\frac{p}{\omega-1},\frac{1-p}{\alpha-1}\right\} . \end{equation}

Let $K\in\K(\forb(H))$ be a $p$-core CRG and denote $g=g_K(p)$. By Lemma~\ref{lem:cores}, any edge between vertices of different colors must be gray.  Since $H$ is a split graph, $H$ would embed into any $K$ with a pair of differently-colored vertices.  So, the vertices in $K$ must be monochromatic.  Furthermore, if $K$ has only gray edges, then either $K$ has at most $\omega-1$ white vertices or at most $\alpha-1$ black vertices. In particular, if $p=1/2$, then all edges must be gray and so $\ed_{\forb(H)}(1/2)=\min\left\{\frac{1/2}{\omega-1},\frac{1/2}{\alpha-1}\right\}$.  Because we have assumed that $\omega\leq\alpha$, the inequality \eqref{eq:splitUB} gives $\ed_{\forb(H)}(p)\leq\frac{1-p}{\alpha-1}$. Since Theorem~\ref{thm:basic}(\ref{it:concon}) gives that $\ed_{\forb(H)}(p)$ is concave down, it is the case that
$$ \ed_{\forb(H)}(p)=\frac{1-p}{\alpha-1},\qquad \mbox{for}\quad p\in[1/2,1] . $$

If $p<1/2$ and $K$ has white vertices, then Lemma~\ref{lem:cores}(\ref{it:smpcore}) gives that all edges must be gray.  In that case, $g_K(p)=\frac{1-p}{\alpha-1}$.  So, we may assume that $p<1/2$ and $K$ has only black vertices and only white or gray edges.  Let ${\bf x}$ be the weight function that is the optimal solution to~\eqref{eq:gdef}.

We make a general observation that holds in both cases:
\begin{fact}\label{fact:graydeg}
  Let $v\in V(K)$. Then, $v$ has fewer than $h-\omega$ gray neighbors.
\end{fact}

\begin{proof}
   Suppose that $v$ has $h-\omega$ gray neighbors; that is, suppose there are vertices $w_1,\ldots,w_{h-\omega}$ such that $vw_i$ is gray for $i=1,\ldots,h-\omega$.  Since $H$ is a split graph, there is a partition of $V(H)$, $W\bigcup A$, where $W$ is a maximum-sized clique and $A$ is an independent set (of size $h-\omega$). Consider the map $\varphi$, which sends all of the vertices of $W$ to vertex $v$ and each vertex in $A$ to a different member of $\{w_1,\ldots,w_{h-\omega}\}$.

   It doesn't matter whether an edge in the sub-CRG induced by $\{w_1,\ldots,w_{h-\omega}\}$ is white or gray, there are no edges in $A$. Thus, $\varphi$ shows that $H\mapsto K$, a contradiction.
\end{proof}

By virtue of the fact that a clique and independent set can intersect in at most one vertex, $h\leq\alpha+\omega\leq h+1$. This yields two cases.~\\

\noindent\textbf{Case 1.} $\alpha+\omega=h+1$.~\\

Let $v\in V(K)$ be a vertex of largest weight $x={\bf x}(v)$. By Fact~\ref{fact:graydeg}, $v$ has at most $h-\omega-1=\alpha-2$ gray neighbors. Because $x$ is the largest weight, Lemma~\ref{lem:symm}(\ref{it:symmsmp}) gives that
\begin{align*}
   \dg(v) &\leq (\alpha-2)x \\
   \frac{p-g}{p}+\frac{1-2p}{p}x &\leq (\alpha-2)x \\
   p-g &\leq (p\alpha-1)x .
\end{align*}
If $p<1/\alpha$, then $g>p\geq p/(\omega-1)$.  If $p\geq 1/\alpha$, then Corollary~\ref{cor:xbound}(\ref{it:xbound0}) gives that
\begin{align*}
   p-g &\leq (p\alpha-1)\frac{g}{1-p} \\
   p(1-p) &\leq gp(\alpha-1) \\
   \frac{1-p}{\alpha-1} &\leq g . \\
\end{align*}

This concludes Case 1.~\\

\noindent\textbf{Case 2.} $\alpha+\omega=h$.~\\

Let $p\in\left(0,\frac{\omega-1}{h-1}\right]$.  Again, let $v\in V(K)$ be a vertex of largest weight $x={\bf x}(v)$.   Fact~\ref{fact:graydeg} gives that $v$ has at most $h-\omega-1$ gray neighbors and Lemma~\ref{lem:symm}(\ref{it:symmsmp}) gives a formula for $\dg(v)$. Thus,
\begin{align*}
   \dg(v) &\leq (h-\omega-1)x \\
   \frac{p-g}{p}+\frac{1-2p}{p}x &\leq (\alpha-1)x \\
   p-g &\leq \left(p(\alpha+1)-1\right)x .
\end{align*}
If $p<1/(\alpha+1)$, then $g>p\geq p/(\omega-1)$.  If $p\geq 1/(\alpha+1)$, then Corollary~\ref{cor:xbound}(\ref{it:xbound0}) gives that
$$   p-g \leq \left(p(\alpha+1)-1\right)\frac{g}{1-p} . $$

Then,
$$ g \geq \frac{1-p}{\alpha} \geq \frac{1-\frac{\omega-1}{h-1}}{\alpha} = \frac{1}{h-1} = \frac{\frac{\omega-1}{h-1}}{\omega-1} \geq \frac{p}{\omega-1} . $$
Finally, we may assume that $p\in\left(\frac{\omega-1}{h-1},\frac{1}{2}\right)$.  We have to split into two cases according to the structure of $H$.~\\

\noindent\textbf{Case 2a.} $\alpha+\omega=h$ and there exists a $c\leq\omega-1$ such that $H$ can be partitioned into $c$ cliques and an independent set of $\alpha-c$ vertices.~\\

Suppose we could find, in $K$, $\alpha$ vertices configured as follows: a gray clique of size $\omega-1$ (call it $v_1,\ldots,v_{\omega-1}$) and $\alpha-\omega+1$ additional vertices that are gray neighbors of each of $v_1,\ldots,v_{\omega-1}$.  One can view this as $\alpha-\omega+1$ cliques of size $\omega$ that share $\omega-1$ common vertices. In that case, we can show that $H\mapsto K$ via a $\varphi$ that first maps each of the $c$ cliques as well as $\omega-1-c$ members of the independent set to a different $v_i$. Second, it maps the remaining $\alpha-\omega+1$ vertices of $H$ to the other vertices.

Thus, such a configuration of $\alpha$ vertices cannot exist in $K$.  Suppose that $g<\min\left\{\frac{p}{\omega-1},\frac{1-p}{\alpha-1}\right\}$.

First, we show $K$ must have a gray $(\omega-1)$-clique.  Let $v_1,\ldots,v_{\ell}$ be a maximal gray clique.  That is, any edge between these vertices is gray and every vertex not in $\{v_1,\ldots,v_{\ell}\}$ has at least one white neighbor in $\{v_1,\ldots,v_{\ell}\}$.  Let $x_i=\x(v_i)$ for $i=1,\ldots,\ell$ and let $X=\sum_{i=1}^{\ell}x_i$.

Using Lemma~\ref{lem:symm}(\ref{it:symmsmp}), we observe that each vertex in $V(K)-\{v_1,\ldots,v_{\ell}\}$ is a gray neighbor of at most $\ell-1$ members of $\{v_1,\ldots,v_{\ell}\}$. By summing the weights of the gray neighbors of each of $v_1,\ldots,v_{\ell}$ that lie outside of the set $\{v_1,\ldots,v_{\ell}\}$, we obtain the following inequality:
\begin{align*}
   \sum_{i=1}^{\ell}\left[\dg(v_i)-X+x_i\right] &\leq (\ell-1)(1-X) \\
   \ell\frac{p-g}{p}+\frac{1-p}{p}X-\ell X &\leq (\ell-1)(1-X) \\
   p-\ell g &\leq (2p-1) X .
\end{align*}

Hence, $\ell\leq\omega-2$ and $g>p/\ell>p/(\omega-1)$ or $K$ has a gray $(\omega-1)$-clique. We may thus suppose that $K$ has a gray $(\omega-1)$-clique. Let one with maximum total weight be $\{v_1,\ldots,v_{\omega-1}\}$ with $x_i=\x(v_i)$ for $i=1,\ldots,\omega-1$ and $X=\sum_{i=1}^{\omega-1}x_i$.  The clique $\{v_1,\ldots,v_{\omega-1}\}$ has at most $\alpha-\omega$ gray neighbors, otherwise the decomposition of $H$ into $c$ cliques and an independent set of $\alpha-c$ vertices would give $H\mapsto K$.

Let $Y$ be the sum of the weights of the common gray neighbors of $v_1,\ldots,v_{\omega-1}$.  Since $X$ is the largest weight of any gray $(\omega-1)$-clique, the value of $Y$ is at most $\alpha-\omega$ times the average weight of the $\omega-1$ vertices that define $X$. Hence,
$$ Y\leq (\alpha-\omega)\frac{X}{\omega-1} . $$

Therefore, if we sum the weights of the gray neighbors of each $v_i$ that are not part of $\{v_1,\ldots,v_{\omega-1}\}$, the common neighbors will be summed $\omega-1$ times and all other vertices will be counted at most $\omega-2$ times.  In the inequality below, the left-hand side counts the sum of the gray neighbors of $v_i$ and the right-hand side bounds this sum.
$$ \sum_{i=1}^{\omega-1}\left[\dg(v_i)-X+x_i\right]\leq (\omega-1)Y+(\omega-2)(1-X-Y) . $$

Using Lemma~\ref{lem:symm}(\ref{it:symmsmp}), we have an exact formula for $\dg(v_i)$ that depends only on $x_i=\x(v_i)$. Also, we use the fact that $\sum_{i=1}^{\omega-1}x_i=X$ to simplify to the following:
\begin{align}
   (\omega-1)\left(\frac{p-g}{p}-X\right)+\frac{1-p}{p}X &\leq Y+(\omega-2)(1-X) \nonumber \\
   (1-X)-(\omega-1)\frac{g}{p}+\frac{1-p}{p}X &\leq (\alpha-\omega)\frac{X}{\omega-1} \nonumber \\
   1+X\left(\frac{1}{p}-\frac{h-2}{\omega-1}\right) &\leq \frac{\omega-1}{p}g , \label{eq:case2a}
\end{align}
because $h=\alpha+\omega$.

If $p<(\omega-1)/(h-2)$, then the term in parentheses in \eqref{eq:case2a} is positive and $g>p/(\omega-1)$, which would complete the proof. If $p\geq (\omega-1)/(h-2)$, then the term in parentheses in \eqref{eq:case2a} is nonpositive. We can use Corollary~\ref{cor:xbound}(\ref{it:xbound0}) to bound each $x_i\leq g/(1-p)$, hence $X\leq (\omega-1)\frac{g}{1-p}$.  Substituting this value for $X$ into \eqref{eq:case2a}, we conclude
\begin{align*}
   1+\frac{(\omega-1)g}{1-p}\left(\frac{1}{p}-\frac{h-2}{\omega-1}\right) &\leq \frac{\omega-1}{p}g \\
   1 &\leq g\left(\frac{\omega-1}{p}-\frac{\omega-1}{p(1-p)}+\frac{h-2}{1-p}\right) \\
   1 &\leq g\left(\frac{h-\omega-1}{1-p}\right) .
\end{align*}
So, $g\geq (1-p)/(h-\omega-1)$. Since $\alpha=h-\omega$ in this case, $g\geq (1-p)/(\alpha-1)$.  This concludes Case 2a.~\\

Which graphs are in Case 2, but not Case 2a?  Since $\alpha+\omega=h$, we may write $V(H)=A\dotcup W$, where $A$ is an independent set of size $\alpha$ and $W$ is a clique of size $\omega$. Every $w\in W$ has at least one neighbor in $A$.  If any $a\in A$ has more than one neighbor in $W$, then we can greedily find at most $\omega-1$ vertices in $A$ such that the union of their neighborhoods is $W$.  Such a graph would be in Case 2a.

So, the graphs, $H$ with $\omega\leq\alpha$ that are in neither Case 1 nor Case 2a have the property that $N(w)\cap N(w')\cap A=\emptyset$ for all distinct $w,w'\in W$.    This is exactly the case of a clique-star.~\\

\noindent\textbf{Case 2b.} $\alpha+\omega=h$ and $G$ is a clique-star.~\\

In the graph $H$, let $W=\{w_1,\ldots,w_{\omega}\}$ such that $w_i$ has $a_i+1$ neighbors in $A$ for $i=1,\ldots,\omega$ and there are $a_0$ isolated vertices.  Note that $\alpha=a_0+\sum_{i=1}^{\omega}(a_i+1)$.

\begin{fact}
If $\omega\geq 2$ and $H$ is a $(\omega;a_0,\ldots,a_{\omega})$-clique-star and $K$ is a black-vertex CRG (that is, a CRG for which all vertices are black) with no black edges such that there exist vertices $v_1,\ldots,v_{\omega}$ for which
   \begin{itemize}
      \item $\{v_1,\ldots,v_{\omega}\}$ is a gray clique,
      \item for $i=1,\ldots,\omega-1$, $v_i$ has $\alpha-1$ gray neighbors, and
      \item $v_{\omega}$ has at least $\lfloor (\alpha-\omega)/\omega\rfloor+\omega-1$ gray neighbors (including $v_1,\ldots,v_{\omega-1}$).
   \end{itemize}
Then, $H\mapsto K$. \label{fact:embed}
\end{fact}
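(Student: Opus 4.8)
The plan is to construct an explicit embedding $\varphi\colon V(H)\to V(K)$ in each of the two cases, using the standing assumption of this section that $K$ has only black vertices and only white or gray edges. Under that assumption the embedding conditions simplify drastically: two adjacent vertices of $H$ must be sent either to a common (black) vertex or to the endpoints of a gray edge, while two non-adjacent vertices need only be sent to distinct vertices, since every edge of $K$ is white or gray and hence already witnesses a non-adjacency. Thus the only genuine constraints are that each leaf be sent to a gray neighbor of the image of its center, that the images of the clique $W$ form a gray clique (or collapse), and that $\varphi$ be injective on the independent set $A$.

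For the first bullet, let $u$ be a vertex with at least $\alpha$ gray neighbors. I would send all of $W$ to the single vertex $u$ (legitimate because $W$ is a clique and $u$ is black) and send the $\alpha$ vertices of $A$ injectively to $\alpha$ distinct gray neighbors of $u$. Every leaf is then gray-adjacent to $u=\varphi(W)$ as required, each isolated vertex is non-adjacent to $W$ and so may sit at any gray neighbor, and the images of $A$ are distinct, so all non-adjacencies inside $A$ are respected.

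For the second bullet I would set $w_i\mapsto v_i$, so that the gray clique $\{v_1,\dots,v_\omega\}$ realizes the clique edges of $H$. The key device is to \emph{absorb} one leaf of each $w_i$ into $v_i$ itself: a leaf is adjacent only to its center, so sending it to the black vertex $v_i$ already carrying $w_i$ is legal, and this lowers the number of gray neighbors $v_i$ must supply from $a_i+1$ to $a_i$. After relabelling so that $a_\omega=\min_i a_i$ (whence $a_\omega\le\lfloor(\alpha-\omega)/\omega\rfloor$, since $\sum_i a_i=\alpha-\omega-a_0$), I would place the remaining leaves greedily: first the $a_\omega$ leaves of $w_\omega$ into the at least $\lfloor(\alpha-\omega)/\omega\rfloor$ gray neighbors of $v_\omega$ lying outside the clique, and then, for $i=1,\dots,\omega-1$, the $a_i$ leaves of $w_i$ into fresh outside-clique gray neighbors of $v_i$, of which there are $\alpha-\omega$. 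Finally the $a_0$ isolated vertices go to any remaining vertices outside the clique. The two counting inequalities that make this succeed are then checked directly: at stage $i\le\omega-1$ the number of already-used vertices is at most $a_\omega+\sum_{j<i}a_j$, and since $a_\omega+\sum_{j\le i}a_j\le\sum_j a_j=\alpha-\omega-a_0\le\alpha-\omega$, at least $a_i$ gray neighbors of $v_i$ remain free; and since $v_1$ together with its $\alpha-1$ gray neighbors already forces $|V(K)|\ge\alpha$, there are at least $a_0$ vertices left outside the clique for the isolated vertices.

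The main obstacle is the second bullet, namely reconciling the weak hypothesis on $v_\omega$ (only about $(\alpha-\omega)/\omega$ gray neighbors) with the possibly large leaf demand $a_\omega+1$ of its center, all while the gray neighborhoods of distinct $v_i$ may overlap arbitrarily. The absorption trick removes the fatal off-by-one by cutting the demand on $v_\omega$ from $a_\omega+1$ to $a_\omega\le\lfloor(\alpha-\omega)/\omega\rfloor$; assigning the lightest center to $v_\omega$ matches demand to capacity; and processing $v_\omega$ first prevents the other centers from consuming its scarce gray neighbors. The overlap among neighborhoods is then absorbed into the single global bound $\sum_i a_i\le\alpha-\omega$, which is exactly what the definition of a clique-star guarantees.
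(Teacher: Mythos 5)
Your proposal is correct and takes essentially the same approach as the paper's proof: in the first case both map all of $W$ to the vertex with $\alpha$ gray neighbors and embed $A$ injectively into that gray neighborhood, and in the second case the paper uses exactly your absorption device (mapping $w_i$ together with one of its leaves to $v_i$), assigns the center of smallest leaf-demand to $v_\omega$, embeds it first, and places the remaining leaves greedily using the bound $\sum_{i=1}^{\omega} a_i \le \alpha-\omega$. Your final count for the $a_0$ isolated vertices (via $|V(K)|\ge\alpha$) is a trivial variant of the paper's count inside $N_G(v_1)$.
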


\begin{proof}[Proof of Fact~\ref{fact:embed}]
By Fact~\ref{fact:graydeg}, we may assume the maximum gray degree of $K$ is at most $\alpha-1$.

Without loss of generality, let $a_1\geq\cdots\geq a_{\omega}$. Our mapping is done recursively:  Map $w_{\omega}$ and one of its neighbors to $v_{\omega}$.  Map its remaining $A$-neighbors ($a_{\omega}\leq\lfloor (\alpha-\omega)/\omega\rfloor$ of them) to each of $a_{\omega}$ gray neighbors of $v_{\omega}$ that are not in $\{v_1,\ldots,v_{\omega-1}\}$.

Having embedded $w_{\omega},\ldots,w_{i+1}$ and each of their respective $A$-neighbors into a total of at most $\sum_{j=i+1}^{\omega}(a_j+1)$ vertices of $K$, we map $w_i$ and one of its $A$-neighbors into $v_i$ and its remaining $a_i$ $A$-neighbors into arbitrary unused gray neighbors of $v_i$.  After $w_1$ and its neighbors are mapped, we map the remaining $a_0$ isolated vertices arbitrarily into the vertices of $K$ that were not already used.

This mapping can be accomplished because the fact that each of the $v_i$ have at least $\alpha-1$ gray neighbors ensures that, even at the last step, when $w_1$ and a neighbor is embedded, there are at least $\alpha-1$ gray neighbors of $v_1$.  The number of gray neighbors of $v_1$ that were used are the $\omega-1$ vertices $v_i$ and at most $\sum_{j=2}^{\omega}a_j=\alpha-\omega-a_1-a_0$ others, for a total of $\alpha-1-a_1-a_0$.  So, there are enough gray neighbors of $v_1$ to embed the $a_1$ neighbors of $w_1$ as well as the $a_0$ isolated vertices.  Thus, $H\mapsto K$.
\end{proof}

\begin{fact}
Let $p\in(0,1/2)$ and let $K$ be a black-vertex CRG with no black edges.  If $g_K(p)<\min\left\{p/(\omega-1), (1-p)/(\alpha-1)\right\}$, then there exist vertices $v_1,\ldots,v_{\omega}$ for which
   \begin{itemize}
      \item $\{v_1,\ldots,v_{\omega}\}$ is a gray clique,
      \item for $i=1,\ldots,\omega-1$, $v_i$ has $\alpha-1$ gray neighbors, and
      \item $v_{\omega}$ has at least $\lfloor (\alpha-\omega)/\omega\rfloor+\omega-1$ gray neighbors (including $v_1,\ldots,v_{\omega-1}$).
   \end{itemize}
\label{fact:bigdeg}
\end{fact}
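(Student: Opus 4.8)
The plan is to prove the stated dichotomy directly: assume $g:=g_K(p)\le\min\{p/(\omega-1),(1-p)/(\alpha-1)\}$ and that the second alternative fails, i.e.\ every vertex of $K$ has at most $\alpha-1$ gray neighbors, and then produce either the configuration $K\approx K(0,\alpha-1)$ or the gray $\omega$-clique of the third alternative. Since $p<1/2$ and $K$ is a black-vertex CRG, Lemma~\ref{lem:cores} guarantees every edge is white or gray. Throughout I would lean on two workhorses, valid for every $v$: the Localization identity $\dg(v)=\frac{p-g}{p}+\frac{1-2p}{p}\x(v)$ from Lemma~\ref{lem:local}(\ref{it:localsmp}), and the bound $\x(v)\le g/(1-p)$ from Corollary~\ref{cor:xbound}(\ref{it:xbound0}).

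First I would locate a gray clique on $\omega$ vertices. Taking a maximal gray clique $\{v_1,\dots,v_\ell\}$ with $X=\sum_i\x(v_i)$ and summing the Localization identity over its members, while using maximality to bound the external gray weight exactly as in Case~2a (the term $cX$ present there is absent for clique-stars), yields $p-\ell g\le(2p-1)X$. As $2p-1<0$ and $X>0$ this forces $g>p/\ell$, so $\ell\le\omega-1$ would contradict $g\le p/(\omega-1)$. Hence every maximal gray clique, and in particular $K$ itself, contains a gray $\omega$-clique.

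The substantial step is to refine this to the individual gray-degree requirements. I would first show $K$ has a vertex of gray degree exactly $\alpha-1$: were there none, then every gray degree is at most $\alpha-2$, and applying the Localization identity to a maximum-weight vertex $v$ together with the fact that $\dg(v)$ is at most its gray degree times $\x(v)$ and $\x(v)\le g/(1-p)$ forces $g\ge(1-p)/(\alpha-1)$; combined with the hypothesis this becomes an equality whose extremal analysis pins the relevant weights to $1/(\alpha-1)$ and returns $K\approx K(0,\alpha-1)$, the first alternative. Assuming we are not in that degenerate case, I would grow a gray clique of full-degree vertices one vertex at a time, at each stage finding a common gray neighbor of the current clique that is again full-degree (the failure of which again drives $g$ up to $(1-p)/(\alpha-1)$), until $\omega-1$ full-degree vertices $v_1,\dots,v_{\omega-1}$ forming a gray clique are obtained. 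Finally, completing this to a gray $\omega$-clique and a counting/pigeonhole argument distributing the $\alpha-\omega$ non-clique gray neighbors across the $\omega$ clique-vertices should deliver a vertex $v_\omega$ with at least $\lfloor(\alpha-\omega)/\omega\rfloor+\omega-1$ gray neighbors, which is the third alternative.

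The main obstacle is precisely this refinement: the tools of the previous cases control weighted quantities ($\dg$, $X$), whereas the conclusion here is phrased in terms of unweighted gray-degree counts and a floor, so the argument must convert weight inequalities into exact vertex counts and carefully isolate the boundary case $K\approx K(0,\alpha-1)$ in which equality holds throughout. I expect the floor $\lfloor(\alpha-\omega)/\omega\rfloor$ to emerge from a pigeonhole over the $\omega$ clique-vertices and the at most $\alpha-\omega$ leaves they must absorb, mirroring the smallest-leaf estimate $a_\omega\le\lfloor(\alpha-\omega)/\omega\rfloor$ that makes the embedding in Fact~\ref{fact:embed} succeed. Corner cases, such as $\omega=2$ or $p$ near the lower end of the relevant interval where $\alpha p-1$ may fail to be positive, will need to be checked separately but should not alter the overall structure.
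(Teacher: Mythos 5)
Your opening steps are sound and match the paper's toolkit: the maximal-gray-clique computation (summing the localization identity over the clique and using maximality) does show that any maximal gray clique has at least $\omega$ vertices, and your argument that a maximum-weight vertex either has gray degree at least $\alpha-1$ or forces $g=(1-p)/(\alpha-1)$, with the extremal analysis returning $K\approx K(0,\alpha-1)$, is exactly how the paper handles $v_1$. The genuine gap is the inductive step you dispose of in one clause: ``at each stage finding a common gray neighbor of the current clique that is again full-degree (the failure of which again drives $g$ up to $(1-p)/(\alpha-1)$).'' This is the heart of the proof, and no mechanism for it is offered. The difficulty is that a common gray neighbor $u$ of the clique built so far is \emph{not} a globally maximum-weight vertex, so the argument used for $v_1$ does not transfer: the gray neighbors of $u$ may individually be heavier than $u$, and the only a priori bound on their weights is the global one $\x(w)\leq g/(1-p)$ from Corollary~\ref{cor:xbound}. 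The resulting pigeonhole $|N_G(u)|\geq\dg(u)/x_1\geq\frac{(p-g)(1-p)}{pg}$ is provably too weak: at $g=(1-p)/(\alpha-1)$ it gives only $(\alpha-1)-\frac{1-p}{p}<\alpha-2$ for every $p<1/2$, so ``failure'' cannot be converted into $g\geq(1-p)/(\alpha-1)$ by any argument of this shape. The paper closes this gap with machinery your proposal never touches: it takes $v_{i+1}$ to be the \emph{maximum-weight} vertex of the current common gray neighborhood, partitions $N_G(v_i)$ into the earlier $v_j$'s, the common gray neighborhood (each vertex of weight at most $x_i$ by the greedy choice), and the remaining vertices classified by the least index $j$ with $v_ju$ white (each of weight at most $x_j$, again by the greedy choice), and then combines the resulting sum $\sum_{j\leq i}1/x_j$ with Jensen's inequality and $X\leq ig/(1-p)$ to recover $|N_G(v_i)|\geq\frac{1-p-g}{g}\geq\alpha-2$, followed by the same extremal analysis.

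The final step for $v_\omega$ is also unsubstantiated as written: ``distributing the $\alpha-\omega$ non-clique gray neighbors across the $\omega$ clique-vertices'' describes the embedding in Fact~\ref{fact:embed}, not a lower bound on $v_\omega$'s own gray degree, and the degrees of the other clique vertices place no purely combinatorial constraint on it. What the paper actually does is apply the crude weight pigeonhole above to $v_\omega$ itself, $|N_G(v_\omega)|\geq\left\lceil\frac{p-g}{p}\cdot\frac{1-p}{g}\right\rceil\geq\left\lceil(\alpha-1)\frac{\omega-2}{\omega-1}\right\rceil$ after a case split comparing $p$ with $\frac{\omega-1}{h-2}$, and then an arithmetic argument with $\alpha=q\omega+r$; indeed the reason the Fact demands only $\lfloor(\alpha-\omega)/\omega\rfloor+\omega-1$ gray neighbors of $v_\omega$, rather than $\alpha-1$, is that this cruder bound is all that is available for a non-maximum-weight vertex. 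Moreover, for $\omega=2$ that bound is vacuous, and the paper needs a separate computation using $x_2\geq\dg(v_1)/(\alpha-1)$; so the corner case you defer is not a routine check but a distinct argument.
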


\begin{proof}[Proof of Fact~\ref{fact:bigdeg}]
By Fact~\ref{fact:graydeg}, we may assume the maximum gray degree of $K$ is at most $\alpha-1$.

We find $v_1,\ldots,v_{\omega}$ greedily.  Choose $v_1$ to be a vertex of largest weight.  Stop if $i=\omega$ or if $N_G(v_1)\cap\cdots\cap N_G(v_i)$ is empty.  Otherwise, let $v_{i+1}$ be a vertex of largest weight in $N_G(v_1)\cap\cdots\cap N_G(v_i)$.  We will show later that this process creates at least $\omega$ vertices.

First, we find the number of gray neighbors of $v_1$, using Lemma~\ref{lem:symm}(\ref{it:symmsmp}) and the fact that $x_1$ is the largest weight.
$$ |N_G(v_1)|\geq\left\lceil\frac{\dg(v_1)}{x_1}\right\rceil \geq \frac{p-g}{px_1}+\frac{1-2p}{p} . $$
Using Corollary~\ref{cor:xbound}(\ref{it:xbound0}), we have that $x_1\leq g/(1-p)$ and so
$$ |N_G(v_1)|\geq\frac{1-p-g}{g}>\alpha-2 . $$
Thus, we may assume $|N_G(v_1)|\geq\alpha-1$. Since $|N_G(v_i)|$ is an integer and must be at most $\alpha-1$, we may assume $|N_G(v_1)|=\alpha-1$.

For $i\in\{2,\ldots,\omega-1\}$, we let $X_i=\sum_{j=1}^i x_j$ and consider the common gray neighborhood of $\{v_1,\ldots,v_i\}$.  For a set $U\subseteq V(K)$, we use $\x(U)$ to denote $\sum_{u\in U}\x(u)$.  Now we compute the weight of the common gray neighborhood of $\{v_1,\ldots,v_i\}$:
\begin{align*}
   \x\left(N_G(v_1)\cap\cdots\cap N_G(v_i)\right) &\geq \dg(v_i)-(X_i-x_i)-\sum_{j=1}^{i-1}\x\left(N_W(v_j)\right) \\
   &= \dg(v_i)-(X_i-x_i)-\sum_{j=1}^{i-1}\left(1-x_j-\dg(v_j)\right) ,
\end{align*}
because $K$ being $p$-core for $p\leq 1/2$ means that each black vertex has only white or gray neighbors.
Simplifying, then using Lemma~\ref{lem:symm}(\ref{it:symmsmp}),
\begin{align}
   \x\left(N_G(v_1)\cap\cdots\cap N_G(v_i)\right) &\geq \sum_{j=1}^i\dg(v_j)-(i-1) \nonumber \\
   &\geq \sum_{j=1}^i\left(\frac{p-g}{p}+\frac{1-2p}{p}x_j\right)-(i-1) \nonumber \\
   &= \frac{p-ig}{p}+\frac{1-2p}{p}X_i > 0 \label{eq:graynbhd} .
\end{align}
The last inequality occurs because $i\leq\omega-1$, $g<p/(\omega-1)$, $p<1/2$ and $X_i>x_i>0$.  Thus, $v_{i+1}$ must exist.

We use these calculations to obtain the number of vertices in $N_G(v_i)$ for $i=2,\ldots,\omega-1$.  First note that $v_i$ has $i-1$ gray neighbors among $\{v_1,\ldots,v_{i-1}\}$ and that every vertex that is a gray neighbor of each of $v_1,\ldots,v_i$ has weight at most $x_i$.

For a fixed $i$, partition the set $N_G(v_i)-\left(\{v_1,\ldots,v_{i-1}\}\cup\bigcap_{j=1}^{i-1}N_G(v_j)\right)$ into $T_1\dotcup\cdots\dotcup T_{i-1}$, where $T_j=N_G(v_i)\cap\bigcap_{j'=1}^{j-1}N_G(v_{j'})\cap N_W(v_j)$. Here, $T_j$ is the set of vertices that are gray neighbors of $v_i$ and gray neighbors of $v_1,\ldots,v_{j-1}$ but are white neighbors of $v_j$. So, by definition,
\begin{align*}
   N_G(v_i) &= \{v_1,\ldots,v_{i-1}\}\dotcup \left(N_G(v_1)\cap\cdots\cap N_G(v_i)\right)\dotcup \bigcup_{j=1}^{i-1} T_j \\
   |N_G(v_i)| &= (i-1)+\left|N_G(v_1)\cap\cdots\cap N_G(v_i)\right|+\sum_{j=1}^{i-1} |T_j| .
\end{align*}

The largest weight of a vertex in $N_G(v_1)\cap\cdots\cap N_G(v_i)$ is at most $x_i=\x(v_i)$, otherwise such a vertex would be chosen in place of $v_i$.  Similarly, the largest weight of a vertex in $T_j$ is at most $x_j$.  As a result,
\begin{align}
   |N_G(v_i)| &\geq (i-1) +\left\lceil\frac{\x\left(N_G(v_1)\cap\cdots\cap N_G(v_{i})\right)}{x_i}\right\rceil +\sum_{j=1}^{i-1}\left\lceil\frac{\x(T_j)}{x_j}\right\rceil \nonumber \\
   &\geq (i-1) +\frac{1}{x_i}\x\left(N_G(v_1)\cap\cdots\cap N_G(v_{i})\right) +\sum_{j=1}^{i-1}\frac{1}{x_j}\x(T_j) \label{eq:Tsum} .
\end{align}

We can rewrite this inequality as follows: Assign coefficient $\frac{1}{x_i}$ to every vertex in $N_G(v_i)-\{v_1,\ldots,v_{i-1}\}$.  Then for $j=1,\ldots,i-1$, add $\frac{1}{x_j}-\frac{1}{x_i}$ to the coefficient of every vertex in $N_W(v_j)$.  As a result, every vertex in $N_G(v_1)\cap\cdots\cap N_G(v_{i})$ gets coefficient $\frac{1}{x_i}$ and, for $j=1,\ldots,i-1$, every vertex in $T_j$ gets coefficient at most $\frac{1}{x_j}$.  Every other vertex gets a nonpositive coefficient because $\frac{1}{x_j}\leq\frac{1}{x_i}$.

With $X_i=x_1+\cdots+x_i$, we have a lower bound for the expression in \eqref{eq:Tsum}:
\begin{align*}
   |N_G(v_i)| &\geq (i-1) +\frac{1}{x_i}\left(\x(N_G(v_i))-(X_i-x_i)\right) +\sum_{j=1}^{i-1}\left(\frac{1}{x_j}-\frac{1}{x_i}\right)\x(N_W(v_j)) \\
   &= (i-1) +\frac{1}{x_i}\left(\frac{p-g}{p}+\frac{1-2p}{p}x_i-X_{i-1}\right) \\
   & \;\;\;\; +\sum_{j=1}^{i-1}\left(\frac{1}{x_j}-\frac{1}{x_i}\right)\left(\frac{g}{p}-\frac{1-p}{p}x_j\right) ,
\end{align*}
by using the fact that $\x(N_W(v_j))=1-x_j-\dg(v_j)$ and by using $\dg(v_j)=\frac{p-g}{p}+\frac{1-2p}{p}x_j$ from Lemma~\ref{lem:symm}(\ref{it:symmsmp}).

Now we expand the expression:
\begin{align}
   |N_G(v_i)| &\geq (i-1) +\frac{1}{x_i}\left(\frac{p-g}{p}-X_{i-1}\right) +\frac{1-2p}{p} \nonumber \\
   & \;\;\;\; +\frac{g}{p}\sum_{j=1}^{i-1}\frac{1}{x_j} -\frac{(i-1)g}{px_i} -\frac{1-p}{p}(i-1) +\frac{1-p}{px_i}X_{i-1} \nonumber \\
   &= \frac{g}{p}\sum_{j=1}^{i-1}\frac{1}{x_j} +\frac{2-i+2(i-2)p}{p} +\frac{1}{x_i}\left(\frac{p-ig}{p}+\frac{1-2p}{p}X_{i-1}\right) . \label{eq:degcount}
\end{align}

If $i=1$, then \eqref{eq:degcount} simplifies to $\frac{p-g}{px_1}+\frac{1-2p}{p}$. Now suppose $i\in\{2,\ldots,\omega-1\}$. Using Jensen's inequality, we obtain
$$ \sum_{j=1}^{i-1}\frac{1}{x_j} \geq \frac{i-1}{X_{i-1}/(i-1)} =\frac{(i-1)^2}{X_{i-1}} . $$

For $i\in\{2,\ldots,\omega-1\}$, we return to \eqref{eq:degcount} and use the bound above, along with the fact that $x_i\leq X_{i-1}/(i-1)$ to obtain the following:
\begin{align*}
   |N_G(v_i)| &\geq \frac{g}{p}\left(\frac{(i-1)^2}{X_{i-1}}\right) +\frac{2-i+2(i-2)p}{p} +\frac{i-1}{X_{i-1}}\left(\frac{p-ig}{p}+\frac{1-2p}{p}X_{i-1}\right) \\
   &=\frac{i-1}{X_{i-1}}\left(\frac{p-g}{p}\right)+\frac{1-2p}{p} .
\end{align*}

Since $g<p/(\omega-1)\leq p$, we can use the bound $X_{i-1}/(i-1)\leq x_1$ and obtain that for $i\in\{1,\ldots,\omega-1\}$,
\begin{align*}
   |N_G(v_i)| &\geq \frac{1}{x_1}\left(\frac{p-g}{p}\right)+\frac{1-2p}{p} \\
   &\geq \frac{1-p}{g}\left(\frac{p-g}{p}\right)+\frac{1-2p}{p}=\frac{1-p}{g}-1 ,
\end{align*}
because Corollary~\ref{cor:xbound}(\ref{it:xbound0}) gives $x_1\leq g/(1-p)$.

Since $g<(1-p)/(\alpha-1)$, we have $|N_G(v_i)|>\alpha-2$. Since $|N_G(v_i)|<\alpha$, we have $|N_G(v_i)|=\alpha-1$ for $i=1,\ldots,\omega-1$.

Finally, we try to determine the number of vertices adjacent to $v_{\omega}$ via a gray edge.  We only need $|N_G(v_{\omega})|\geq\lfloor\alpha/\omega\rfloor+\omega-2$ in order to finish the proof.  First, note that the very existence of $v_{\omega}$ ensures that $|N_G(v_{\omega})|\geq\omega-1$.  Thus, we may assume that $\alpha\geq 2\omega$.

Second, suppose that $\omega\geq 3$.  Recalling that every vertex has weight at most $\frac{g}{1-p}$ from Corollary~\ref{cor:xbound}(\ref{it:xbound0}), we have the simple inequality,
$$ \frac{g}{1-p}|N_G(v)| \geq \dg(v) . $$
Therefore, using $\dg(v)=\frac{p-g}{p}+\frac{1-2p}{p}\x(v)$ from Lemma~\ref{lem:symm}(\ref{it:symmsmp}), we have, for any vertex $v$,
\begin{align*}
   |N_G(v)| &\geq \frac{p-g}{p}\cdot\frac{1-p}{g} \\
   &> \left\{\begin{array}{ll}
             \frac{p-\frac{p}{\omega-1}}{p}\cdot\frac{1-p}{p/(\omega-1)}, & \mbox{if $p\leq\frac{\omega-1}{h-2}$;} \\
             \frac{p-\frac{1-p}{\alpha-1}}{p}\cdot\frac{1-p}{(1-p)/(\alpha-1)}, & \mbox{if $p\geq\frac{\omega-1}{h-2}$.}\end{array}\right. \\
   &\geq \left\{\begin{array}{ll}
                (\omega-2)\frac{1-p}{p}, & \mbox{if $p\leq\frac{\omega-1}{h-2}$;} \\
                \frac{p\alpha-1}{p}, & \mbox{if $p\geq\frac{\omega-1}{h-2}$.}\end{array}\right. \\
   &\geq (\alpha-1)\frac{\omega-2}{\omega-1} .
\end{align*}

Since $|N_G(v)|$ is an integer, it is the case that $|N_G(v)|\geq \left\lfloor (\alpha-1)\frac{\omega-2}{\omega-1}\right\rfloor+1$. Recall that $\alpha\geq 2\omega$ and $\omega\geq 3$.  Thus,
\begin{align*}
   |N_G(v)| &\geq \left\lfloor (\alpha-1)\frac{\omega-2}{\omega-1}\right\rfloor+1 \\
   &= \left\lfloor \frac{\alpha}{\omega} +\alpha\left(\frac{\omega-2}{\omega-1} -\frac{1}{\omega}\right) -\frac{\omega-2}{\omega-1}\right\rfloor+1 \\
   &\geq \left\lfloor \frac{\alpha}{\omega} +\frac{2\omega(\omega-2)}{\omega-1} -2 -\frac{\omega-2}{\omega-1}\right\rfloor+1 \\
   &\geq \left\lfloor \frac{\alpha}{\omega} \right\rfloor -1 +\left\lfloor \frac{(2\omega-1)(\omega-2)}{\omega-1} \right\rfloor \\
   &= \left\lfloor \frac{\alpha}{\omega} \right\rfloor +\omega-2 +\left\lfloor \frac{\omega^2-3\omega+1}{\omega-1} \right\rfloor \\
   &\geq \left\lfloor \frac{\alpha}{\omega} \right\rfloor +\omega-2 ,
\end{align*}
as desired.

Third, since $\omega\geq 2$, the only remaining case is $\omega=2$; i.e., $H$ is a double-star (possibly with isolated vertices).  Recall that $\alpha\geq 2\omega=4$.  Our goal is to show that $|N_G(v_2)|\geq\lfloor\alpha/\omega\rfloor+\omega-2=\lfloor\alpha/2\rfloor$.  The computations are, by now, routine.  We use $x_1\leq g/(1-p)$ and the fact that $v_2$ is the largest-weight vertex in $N_G(v_1)$ and so, $x_2\geq\dg(v_1)/(\alpha-1)$.
\begin{align*}
   |N_G(v_2)| &\geq \frac{\dg(v_2)}{x_1} \\
   &\geq \frac{1}{x_1}\left(\frac{p-g}{p}+\frac{1-2p}{p}x_2\right) \\
   &\geq \frac{1}{x_1}\left(\frac{p-g}{p}+\frac{1-2p}{p}\cdot\frac{\dg(v_1)}{\alpha-1}\right) \\
   &\geq \frac{p-g}{px_1}\left(1+\frac{1-2p}{p(\alpha-1)}\right)+\left(\frac{1-2p}{p}\right)^2\frac{1}{\alpha-1} \\
   &\geq \frac{(p-g)(1-p)}{pg}\left(\frac{p(\alpha-3)+1}{p(\alpha-1)}\right)+\left(\frac{1-2p}{p}\right)^2\frac{1}{\alpha-1} .
\end{align*}

Recalling that, in the case of $\omega=2$, $g<\min\left\{p,(1-p)/(\alpha-1)\right\}$,
\begin{align*}
   |N_G(v_2)| &> \left\{\begin{array}{ll}
                           \left(\frac{1-2p}{p}\right)^2\frac{1}{\alpha-1}, & \mbox{if $p\leq 1/\alpha$;} \\
                           \frac{p\alpha-1}{p}\left(\frac{p(\alpha-3)+1}{p(\alpha-1)}\right)+\left(\frac{1-2p}{p}\right)^2\frac{1}{\alpha-1}, & \mbox{if $p\geq 1/\alpha$.}
                           \end{array}\right. \\
   &= \left\{\begin{array}{ll}
             \left(\frac{1-2p}{p}\right)^2\frac{1}{\alpha-1}, & \mbox{if $p\leq 1/\alpha$;} \\
             \alpha-2-\frac{(1-2p)}{p(\alpha-1)}, & \mbox{if $p\geq 1/\alpha$.}
             \end{array}\right.
\end{align*}
In each case, the smallest value of the expression occurs when $p=1/\alpha$, giving
$|N_G(v_2)|>\frac{(\alpha-2)^2}{\alpha-1}$ and so,
$$ |N_G(v_2)|\geq\left\lfloor\frac{(\alpha-2)^2}{\alpha-1}\right\rfloor+1\geq\alpha-2 =\left\lfloor\frac{\alpha}{2}\right\rfloor+\left\lceil\frac{\alpha}{2}\right\rceil-2 . $$
This is at least $\lfloor \alpha/2\rfloor$ since $\alpha\geq 4$.  This concludes the proof of Fact~\ref{fact:bigdeg}.
\end{proof}

Summarizing, if $H\not\mapsto K$, then $g\geq p/(\omega-1)$ or $g\geq (1-p)/(\alpha-1)$. This concludes the proof of Theorem~\ref{thm:split}.

\subsection{Examples of split graphs}
Items (\ref{it:split:kaeb}) and (\ref{it:split:star}) in Corollary~\ref{cor:splitexamp} were proven in~\cite{BM}.
\begin{cor}\label{cor:splitexamp}
Let $H$ be a graph on $h$ vertices.
   \begin{enumerate}
      \item If $H=K_a+E_b$, then $\ed_{\forb(H)}(p)=\min\left\{\frac{p}{a-1},\frac{1-p}{b}\right\}$. \label{it:split:kaeb}
      \item If $H$ is a star (i.e., $H=E_{h-1}\vee K_1$), then $\ed_{\forb(H)}(p)=\min\left\{p,\frac{1-p}{h-2}\right\}$. \label{it:split:star}
      \item If $H$ is a double-star (i.e., there are adjacent vertices $u$ and $v$ to which every other vertex is adjacent to exactly one), then $\ed_{\forb(H)}(p)=\min\left\{p,\frac{1-p}{h-3}\right\}$. \label{it:split:double-star}
   \end{enumerate}
\end{cor}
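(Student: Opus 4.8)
The plan is to obtain all three formulas as immediate applications of Theorem~\ref{thm:split}: in each case I would exhibit an explicit split partition of $H$, read off the clique number $\omega$ and the independence number $\alpha$, and substitute into~(\ref{eq:split}). Since Theorem~\ref{thm:split} assumes $H$ is neither complete nor empty, I would treat the boundary instances (where the listed graph degenerates to a clique or to an edgeless graph) separately, using the trivial evaluations $\ed_{\forb(K_\omega)}(p)=p/(\omega-1)$ and $\ed_{\forb(\overline{K_\alpha})}(p)=(1-p)/(\alpha-1)$ recorded just before Corollary~\ref{cor:split}, with the convention that a fraction with denominator $0$ is read as $+\infty$.

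For item~(\ref{it:split:kaeb}), take $H=K_a+E_b$ with $a\ge 2$ and $b\ge 1$. Declaring the $a$-clique the clique side and the $b$ isolated vertices the independent side gives a split partition, so $H$ is split. Two clique vertices are adjacent, so $\omega=a$; the $b$ isolated vertices together with a single clique vertex form a maximum independent set, so $\alpha=b+1$. Substituting into~(\ref{eq:split}) gives $\min\{p/(a-1),(1-p)/b\}$.

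For items~(\ref{it:split:star}) and~(\ref{it:split:double-star}) I would handle stars and double-stars together. Both are split: a star $K_{1,h-1}$ partitions into its single center (a clique) and its leaves (an independent set), while a double-star partitions into its two adjacent centers (a clique) and its leaves (an independent set). Both are triangle-free graphs containing an edge, so $\omega=2$. For the star with $h\ge 3$ the $h-1$ leaves are a maximum independent set, so $(\omega,\alpha)=(2,h-1)$ and~(\ref{eq:split}) gives $\min\{p,(1-p)/(h-2)\}$; for a genuine double-star with $h\ge 4$ (so each center carries at least one leaf) I would show the $h-2$ leaves form a maximum independent set, so $(\omega,\alpha)=(2,h-2)$ and~(\ref{eq:split}) gives $\min\{p,(1-p)/(h-3)\}$. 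As a consistency check, $\alpha+\omega=h+1$ in item~(\ref{it:split:kaeb}) and for the star, while $\alpha+\omega=h$ for the double-star, matching the dichotomy $\alpha+\omega\in\{h,h+1\}$ and placing the double-star in the clique-star regime (Case~2b) of Theorem~\ref{thm:split}.

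Because these are corollaries of an already-proven theorem, there is no serious obstacle; the only step needing a short argument is the independence number of the double-star. Writing $L_u,L_v$ for the two leaf sets, the only independent sets that could rival the $h-2$ leaves are $L_u\cup\{v\}$ and $L_v\cup\{u\}$, of sizes $|L_u|+1$ and $|L_v|+1$, each at most $h-2$ precisely because the opposite center carries a leaf; this is exactly why $\alpha=h-2$ requires $h\ge 4$. The remaining care is bookkeeping on the degenerate endpoints — $b=0$ or $a=1$ in~(\ref{it:split:kaeb}), $h=2$ in~(\ref{it:split:star}), and $h\le 3$ in~(\ref{it:split:double-star}) — each of which collapses $H$ to a complete or empty graph and is covered by the trivial evaluations noted above.
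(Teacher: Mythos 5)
Your proposal matches the paper's treatment exactly: the paper states Corollary~\ref{cor:splitexamp} with no separate proof (crediting items~(\ref{it:split:kaeb}) and~(\ref{it:split:star}) to~\cite{BM}), precisely because each graph is split and the formulas follow by reading off $(\omega,\alpha)$ and substituting into Theorem~\ref{thm:split}, which is what you do, including the correct values $\alpha=b+1$, $\alpha=h-1$, and $\alpha=h-2$ respectively. One minor caveat: your claim that the $h\le 3$ endpoint of item~(\ref{it:split:double-star}) ``collapses $H$ to a complete or empty graph'' fails at $h=3$, where the double-star is $P_3$ (a star, handled by item~(\ref{it:split:star}), with answer $\min\{p,1-p\}$ rather than $p$); this degenerate case lies outside the intended scope of item~(\ref{it:split:double-star}), whose formula presumes both centers carry a leaf, as your ``genuine double-star'' reading correctly requires.
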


\section{$\forb(H_9)$}
\label{sec:h9}

Marchant and Thomason~\cite{MT} give the example of $\hh=\forb(C_6^*)$, where $C_6^*$ is a $6$-cycle with an additional diagonal edge, such that $\ed_{\hh}(p)$ is not determined by CRGs with all gray edges.  More precisely, they prove that
$$ \ed_{\forb(C_6^*)}(p)=\min\left\{\frac{p}{1+2p},\frac{1-p}{2}\right\} . $$
The CRG which corresponds to $g_K(p)=(1-p)/2$ is $K(0,2)$, the CRG with all edges gray, zero white vertices and two black vertices.  The CRG, $K$, which has $g_K(p)=p/(1+2p)$ for $p\in [0,1/2]$ consists of three vertices: two black vertices connected via a white edge and a white vertex.  The remaining two edges are gray.

The graph $H_9$, shown in Figure~\ref{fig:h9} and cited in~\cite{BM}, generates a hereditary property $\hh=\forb(H_9)$ such that $d_{\hh}^*$ cannot be determined by CRGs of the form $K(a,c)$.  Note that $d_{\forb(C_6^*)}^*$ can be determined by such CRGs, but the part of the function for $p\in (0,1/2)$ cannot.

\subsection{Proof of Theorem~\ref{thm:h9}}
   \noindent\textbf{Upper bound.}  We know that $\chi(H_9)=4$ so let $K^{(1)}=K(3,0)$ where $g_{K^{(1)}}(p)=p/3$.  We also know that $\chi(\overline{H_9})=3$ so let $K^{(4)}=K(0,2)$ where $g_{K^{(4)}}(p)=(1-p)/2$.  In \cite{BM}, another CRG in $\K(\forb(H_9))$ is given, call it $K^{(2)}$.  It consists of 4 white vertices, one black edge and 5 gray edges.  It has edit distance function $g_{K^{(2)}}(p)=\min\{p/3,p/(2+2p)\}$.

   There is a CRG with a smaller $g$ function.  We call it $K^{(3)}$, it consists of $5$ white vertices, two disjoint black edges and the remaining 8 edges gray.  The function $g_{K^{(3)}}(p)$ can be computed by use of Theorem~\ref{thm:components}. In the setup of that theorem, $K^{(3)}$ has 3 components.  Since the components have $g$ functions either $p$ (for the solitary white vertex) or $\min\{p,1/2\}$ (for each of the other two components), the theorem gives that
   $$ g_{K^{(3)}}(p)^{-1}=p^{-1}+2\left(\min\{p,1/2\}\right)^{-1}=\max\{3/p,(1+4p)/p\} . $$

   It is easy to see that $H_9\not\mapsto K^{(1)}$ and $H_9\not\mapsto K^{(4)}$.  In~\cite{BM}, it was shown that $H_9\not\mapsto K^{(2)}$.  To finish the upper bound, it remains to show that $H_9\not\mapsto K^{(3)}$.  Let $\{v_0,v_1,w_1,v_2,w_2\}$ be the vertices of $K^{(3)}$.  Let the components be $\{v_0\}$, $\{v_1,w_1\}$ and $\{v_2,w_2\}$, where each of the latter two induces a black edge.

   First, we show that no component of $K^{(3)}$ can have 4 vertices from $H_9$.  Since there are no independent sets of size 4 and no induced stars on 4 vertices, the only way to have a component of size 4 is to have an induced copy of $C_4$ in the component consisting of, say, $\{v_2,w_2\}$.  It is not difficult to see that deleting two vertices from the set $\{0,3,6\}$ yields a $C_4$-free graph.  So, any $C_4$ contains exactly two members of $\{0,3,6\}$.  Without loss of generality, the induced $C_4$ is $\{1,3,6,8\}$.  But the graph induced by $\{0,2,4,5,7\}$ induces a $C_5$, which cannot be mapped into the sub-CRG induced by $\{v_0,v_1,w_1\}$.  Therefore, if $H_9$ were to map to $K^{(3)}$, each component must contain exactly $3$ vertices.  First we map to $v_0$.  The only independent sets of size $3$ are $\{1,4,7\}$ and $\{2,5,8\}$.  Without loss of generality, assume the former.  Second, we consider the graph induced by $\{0,2,3,5,6,8\}$. Any partition of these vertices into two subsets of $3$ vertices either has a triangle or a copy of $\overline{P_3}$, neither of which maps into $\{v_1,w_1\}$ or $\{v_2,w_2\}$.  So, these six vertices cannot be mapped into $\{v_1,w_1,v_2,w_2\}$.  Hence $H_9\not\mapsto K^{(3)}$.

   The CRGs $K^{(1)}$, $K^{(3)}$ and $K^{(4)}$ give an upper bound on $\ed_{\forb}(H_9)(p)$ of $\min\left\{\frac{p}{3},\frac{p}{1+4p},\frac{1-p}{2}\right\}$.\\

   \noindent\textbf{Lower bound, for $p\leq 1/2$.} Assume, by way of contradiction, that $K$ is a $p$-core CRG such that $H_9\not\mapsto K$ and $g_K(p)<p/3$.  Recall Lemma~\ref{lem:cores}(\ref{it:smpcore}) which gives that $K$ has no black edges and white edges must be incident only to black vertices. If $K$ has at least $2$ white vertices, then it has no black vertices because $H_9\mapsto K(2,1)$.  (The independent sets are $\{1,4,7\}$ and $\{2,5,8\}$ and the clique is $\{0,3,6\}$.)  Since $\chi(H_9)=4$ (the independent sets are $\{1,4,7\}$, $\{0,5\}$, $\{2,5\}$ and $\{3,8\}$), such a CRG has at most $3$ white vertices.  So, if $K$ has at least 2 white vertices, then either $K=K(2,0)$ or $K=K(3,0)$, so Corollary~\ref{cor:components} implies that $g_K(p)\geq p/3$, a contradiction.

   If $K$ has exactly one white vertex, then there is no gray edge among the black vertices because $H_9\mapsto K(1,2)$.  (The independent set is $\{2,7\}$ and the cliques are $\{0,1,8\}$ and $\{3,4,5,6\}$.)  If there are no black vertices, then $g_K(p)=p$, a contradiction. So, let $w$ be the white vertex and $K'=K-\{w\}$ and $k'=|V(K')|$.  Since $K'$ is a clique with all black vertices and all white edges (if any), Proposition 9 from~\cite{Martin} gives that, for $p\in (0,1/2]$, $g_{K'}(p) = p+\frac{1-2p}{k'}>p$.
   By Theorem~\ref{thm:components}, $g_K(p)> 1/(1/p+1/p)=p/2$, a contradiction.

   If $K$ has no white vertices, then let $v_0$ be the vertex with largest weight and let $v_1$ be a gray neighbor of $v_0$.  Let $x_0=\x(v_0)$ and $x_1=\x(v_1)$.  Since $K$ can have no gray triangles ($H_9$ can be partitioned into 3 cliques), $\dg(v_0)+\dg(v_1)\leq 1$.  By Lemma~\ref{lem:symm}(\ref{it:symmsmp}),
   \begin{align*}
      1 &\geq \dg(v_0)+\dg(v_1) \\
        &= 2\frac{p-g_K(p)}{p}+\frac{1-2p}{p}(x_0+x_1) \\
      g_K(p) &\geq \frac{p}{2}+\frac{1-2p}{2}(x_0+x_1)\geq\frac{p}{2} ,
   \end{align*}
   a contradiction.

   Summarizing, if $p\leq 1/2$ and $K$ is a $p$-core CRG such that $H\not\mapsto K$, then $g_K(p)\geq p/3$. \\

   \noindent\textbf{Lower bound, for $p\geq 1/2$.} Assume, by way of contradiction, that $K$ is a $p$-core CRG such that $H_9\not\mapsto K$ and $g_K(p)<\min\left\{\frac{p}{1+4p},\frac{1-p}{2}\right\}$.  Recall Lemma~\ref{lem:cores}(\ref{it:lgpcore}) which gives that $K$ has no white edges and black edges must be incident only to white vertices.  If $K$ has at least $2$ black vertices, then there are no white vertices because $H_9\mapsto K(1,2)$.  (The independent set is $\{4,8\}$ and the cliques are $\{0,1,2,3\}$ and $\{5,6,7\}$.)  Since $\ovchi(H_9)=3$ (the cliques are $\{0,1,2\}$, $\{3,4,5\}$ and $\{6,7,8\}$), such a CRG has at most $2$ black vertices. So, if $K$ has at least 2 black vertices, then $K=K(0,2)$, so Corollary~\ref{cor:components} implies that $g_K(p)\geq (1-p)/2$, a contradiction.

   If $K$ has exactly one black vertex, then there is no gray edge among the white vertices because $H_9\mapsto K(2,1)$.  If there are no white vertices, then $g_K(p)=1-p$, a contradiction.  Let $b$ be the black vertex and $K'=K-\{b\}$ and $k'=|V(K')|$.  Since $K'$ is a clique with all white vertices and all black edges (if any), Proposition 9 from~\cite{Martin} gives that, for $p\in [1/2,1)$, $g_{K'}(p) = 1-p+\frac{2p-1}{k'}>1-p$.
   By Theorem~\ref{thm:components}, $g_K(p)> (1-p)/2$, a contradiction.

   From now on, we will assume that $K$ has only white vertices and, since it is $p$-core for $p\geq 1/2$, all edges are black or gray.  Fact~\ref{fact:3nbhs} and Fact~\ref{fact:2nbhs} establish some of the structural theorems.
   \begin{fact}\label{fact:3nbhs}
      Let $p\in [1/2,1)$ and $K$ be a $p$-core CRG with white vertices and black or gray edges. Let $v$ and $v'$ be vertices connected by a gray edge.  Then, $N_G(v)\cap N_G(v')$ has at most two vertices.
   \end{fact}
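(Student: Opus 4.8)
The plan is to argue by contradiction and produce an explicit embedding. Suppose $N_G(v)\cap N_G(v')$ contained three (or more) vertices; I would fix three distinct ones, $u_1,u_2,u_3$, and show that this forces $H_9\arrows K$, contradicting $K\in\K(\forb(H_9))$. Note first that $v,v',u_1,u_2,u_3$ are five distinct vertices, since a gray neighbor of a vertex is distinct from it and $v\neq v'$. The edges $vv'$, $vu_i$, and $v'u_i$ are all gray by hypothesis, while each edge $u_iu_j$ is \emph{either} black \emph{or} gray, because $K$ is assumed to have no white edges (so $\ewk=\emptyset$).

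The key step is to reuse the partition of $V(H_9)$ already witnessing $H_9\arrows K(2,1)$: the two independent sets $\{1,4,7\}$ and $\{2,5,8\}$ together with the triangle $\{0,3,6\}$, which partition $V(H_9)$. I would define $\varphi$ by sending $\{1,4,7\}\arrows v$, $\{2,5,8\}\arrows v'$, and $0\arrows u_1$, $3\arrows u_2$, $6\arrows u_3$. Since $\{1,4,7\}$ and $\{2,5,8\}$ are independent, collapsing each onto a single white vertex respects the embedding conditions for their internal non-edges. Moreover every edge of $K$ incident to $v$ or $v'$ is gray, so any pair of $H_9$-vertices placed in two different blocks at least one of which is $v$ or $v'$ is automatically consistent, since a gray image edge lies in both $\ewk\cup\egk$ and $\ebk\cup\egk$ and hence imposes no constraint.

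The only step requiring care — and what I would flag as the main obstacle — is the behavior of the three pairs among $u_1,u_2,u_3$, whose mutual edge colors are not pinned down by the hypotheses. The resolution is to observe that I need not case-split on these colors: since $0,3,6$ are pairwise adjacent in $H_9$, each such pair demands only an image edge in $\ebk\cup\egk$, and because $K$ has no white edges, every $u_iu_j$ lies in exactly $\ebk\cup\egk$ whatever its actual color. Thus $\varphi$ is a valid embedding regardless of how the $u_iu_j$ are colored, yielding $H_9\arrows K$ and the desired contradiction. Consequently $N_G(v)\cap N_G(v')$ has at most two vertices; if it had more than three, restricting to any three and applying the same embedding finishes the argument.
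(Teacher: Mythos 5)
Your proof is correct and takes essentially the same route as the paper: the paper's one-line proof maps $H_9$-vertices $0,3,6$ to the three common gray neighbors and collapses the independent sets $\{1,4,7\}$ and $\{2,5,8\}$ onto $v$ and $v'$, exactly as you do, and your verification (gray image edges satisfy both the adjacency and non-adjacency conditions, while the absence of white edges in $K$ handles the triangle $\{0,3,6\}$) is the correct justification. One small wording slip worth noting: it is not true that \emph{every} edge of $K$ incident to $v$ or $v'$ is gray, only the edges $vv'$, $vu_i$, $v'u_i$ among the five image vertices are gray by hypothesis---but those are all your argument actually uses, so the proof stands.
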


   \begin{proof}
   If $N_G(v)\cap N_G(v')$ has three vertices, then map $H_9$ vertices $0$, $3$ and $6$ to each of them, map $\{1,4,7\}$ to $v$ and $\{2,5,8\}$ to $v'$.  This is a map demonstrating that $H_9\mapsto K$.
   \end{proof}

   For the rest of the proof, denote $g=g_K(p)$.

   \begin{fact}\label{fact:2nbhs}
      Let $p\in [1/2,1)$ and $K$ be a $p$-core CRG with white vertices and black or gray edges and let $g=g_K(p)$. Let $v_0$ be a vertex of largest weight and $v_1$ be a vertex that has largest weight among those in $N_G(v_0)$.  Then, $N_G(v_0)\cap N_G(v_1)$ has exactly two vertices or $g>(1-p)/2$ or $g\geq p/3$.
   \end{fact}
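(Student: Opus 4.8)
The plan is to turn the weight balance of the $p$-core $K$ into a lower bound on $g := g_K(p)$, after using Fact~\ref{fact:3nbhs} to restrict the common gray neighbourhood. Set $x_0 = \x(v_0)$, $x_1 = \x(v_1)$, and let $s$ be the total weight of $S := N_G(v_0) \cap N_G(v_1)$. Since $v_1 \in N_G(v_0)$, the edge $v_0 v_1$ is gray, so Fact~\ref{fact:3nbhs} gives $|S| \le 2$. If $|S| = 2$ we are in the first alternative and done, so assume $|S| \le 1$; then $s \le x_1$, because $S \subseteq N_G(v_0)$ contains at most one vertex and every vertex of $N_G(v_0)$ has weight at most $x_1 = \x(v_1)$. (Should $v_0$ have no gray neighbour, then $v_1$ is undefined, but $\dg(v_0) = 0$ together with Lemma~\ref{lem:local}(\ref{it:locallgp}) forces $g \ge 1-p > (1-p)/2$, giving the second alternative immediately; so assume $v_1$ exists.)

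The engine is a single weight count. Because $v_1 \in N_G(v_0)$ and $v_0 \in N_G(v_1)$, the union $N_G(v_0) \cup N_G(v_1)$ contains $v_0$ and $v_1$ and has total weight at most $1$; inclusion--exclusion then yields
$$ \dg(v_0) + \dg(v_1) = \x\big(N_G(v_0) \cup N_G(v_1)\big) + s \le 1 + s , $$
with equality exactly when $N_G(v_0) \cup N_G(v_1) = V(K)$. Substituting $\dg(v_i) = \frac{1-p-g}{1-p} + \frac{2p-1}{1-p}x_i$ from Lemma~\ref{lem:local}(\ref{it:locallgp}) and clearing the denominator $1-p > 0$ gives $2g \ge (1-p)(1-s) + (2p-1)(x_0+x_1)$, and then, using $s \le x_1$ followed by $x_1 \le x_0$,
$$ 2g \ge (1-p) + (2p-1)x_0 + (3p-2)x_1 . $$

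Now I split on the value of $p$, noting that $p/3 = (1-p)/2$ precisely at $p = 3/5$. For $p > 3/5$, since $x_0 \ge x_1 > 0$ and $2p-1 > 0$, we have $(2p-1)x_0 + (3p-2)x_1 \ge (5p-3)x_1 > 0$, so $2g > (1-p)$ and $g > (1-p)/2$ --- the second alternative. For $1/2 \le p \le 3/5$, the coefficient $3p-2$ is negative, so $x_1 \le x_0$ yields $(2p-1)x_0 + (3p-2)x_1 \ge (5p-3)x_0$; since $5p-3 \le 0$ I may insert $x_0 \le g/p$ from Corollary~\ref{cor:xbound}(\ref{it:xbound1}) to obtain $2g \ge (1-p) + (5p-3)g/p$, and solving this linear inequality gives $g \ge p/3$ --- the third alternative.

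Finally, the equality case $g = p/3$ (with $1/2 \le p \le 3/5$) requires equality throughout: $N_G(v_0) \cup N_G(v_1) = V(K)$, $s = x_1$ (hence $|S| = 1$, say $S = \{u\}$ with $\x(u) = x_1$), and $x_0 = x_1$. For $p < 3/5$ the step $x_0 \le g/p$ is also forced tight, so $x_0 = x_1 = \x(u) = g/p = 1/3$; then $\dg(v_0) = 2/3$ is exhausted by $v_1$ and $u$, leaving no further gray neighbours, and likewise for $v_1$, so $V(K) = \{v_0, v_1, u\}$ is a gray triangle and $K \approx K(3,0)$. I expect the genuine obstacle to be the threshold $p = 3/5$: there $5p-3 = 0$, the bound $x_0 = g/p$ is not forced, and the inequalities alone permit additional vertices of weight $\tfrac12-\tfrac{3}{2}x_0$. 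Ruling these out requires the separate observation that any such configuration fails to be a $p$-core --- the stationarity condition $(\mk(p)\x)_v = g$ breaks at one of the added vertices, equivalently its gray component collapses to a smaller CRG of the same $g$-value --- so that $K \approx K(3,0)$ remains the only equality case.
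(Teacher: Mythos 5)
Your proof is correct and follows essentially the same route as the paper's: the identical inclusion--exclusion bound $\dg(v_0)+\dg(v_1)\le 1+s$, the same substitution from Lemma~\ref{lem:local}(\ref{it:locallgp}), the same use of $\x(v_1)\le\x(v_0)$ and of Corollary~\ref{cor:xbound}(\ref{it:xbound1}), and the same thresholds (you merely merge the paper's cases $p\ge 2/3$ and $3/5\le p<2/3$ into the single case $p>3/5$, and additionally dispose of the degenerate situation $N_G(v_0)=\emptyset$). The boundary issue at $p=3/5$ that you flag in the equality characterization is genuine, but the paper's own proof passes over it silently --- it asserts $g>(1-p)/2$ for all $p\ge 3/5$, which its displayed inequality only yields strictly for $p>3/5$, and its ``easy to see'' equality claim likewise relies on tightness of $\x(v_0)\le g/p$, which is forced only when $p<3/5$ --- so your treatment is, if anything, the more careful of the two.
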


   \begin{proof} Because of Fact~\ref{fact:3nbhs}, if the statement of Fact~\ref{fact:2nbhs} is not true, then $N_G(v_0)\cap N_G(v_1)$ has at most one vertex which, by the choice of $v_1$, has weight at most $\x(v_1)$ and, by inclusion-exclusion, has weight at least $\dg(v_0)+\dg(v_1)-1$. By Lemma~\ref{lem:symm}(\ref{it:symmlgp}),
   \begin{align}
      \x(v_1) &\geq \dg(v_0)+\dg(v_1)-1 \nonumber \\
      &\geq 2\frac{1-p-g}{1-p}+\frac{2p-1}{1-p}\left(\x(v_0)+\x(v_1)\right)-1
      \nonumber \\
      g &\geq \frac{1-p}{2}+\frac{2p-1}{2}\x(v_0)-\frac{2-3p}{2}\x(v_1) . \label{eq:2nbhs1}
   \end{align}
   Note that \eqref{eq:2nbhs1} holds even if $N_G(v_0)\cap N_G(v_1)$ is empty. If $p\geq 2/3$, then $g>(1-p)/2$.  If $p<2/3$, then use $\x(v_1)\leq\x(v_0)$ in \eqref{eq:2nbhs1}.
   \begin{equation}
      g \geq \frac{1-p}{2}+\frac{5p-3}{2}\x(v_1) . \label{eq:2nbhs2}
   \end{equation}
   If $p>3/5$, then $g>(1-p)/2$.  If $p\leq 3/5$, then use the fact that Corollary~\ref{cor:xbound}(\ref{it:xbound1}) gives $\x(v_1)\leq g/p$, which we use in \eqref{eq:2nbhs2}.
   \begin{align*}
      g &\geq \frac{1-p}{2}+\frac{5p-3}{2}\x(v_1) \geq \frac{1-p}{2}+\frac{5p-3}{2}\left(\frac{g}{p}\right) \\
      g &\geq \frac{p}{3} .
   \end{align*}
   \end{proof}

   Given Fact~\ref{fact:2nbhs} and the assumption that $g_K(p)<\min\left\{\frac{p}{1+4p},\frac{1-p}{2}\right\}$ (which is also at most $p/3$ for $p\geq 1/2$), we can identify $v_0$, a vertex of maximum weight, $v_1$ a vertex of maximum weight among those in $N_G(v_0)$ and $\{v_2,w_2\}=N_G(v_0)\cap N_G(v_1)$.  (The vertex $v_0$ must have a gray neighbor, $v_1$, otherwise by Lemma~\ref{lem:symm}(\ref{it:symmlgp}), we must have $g\geq 1-p$.) Without loss of generality, let $\x(v_2)\geq\x(w_2)$.  For ease of notation, let $x_i=\x(v_i)$ for $i=0,1,2$.  If $N_G(v_0)\cap N_G(v_2)-\{v_1\}$ is nonempty, then let its unique vertex be denoted $w_1$.  (Uniqueness is a consequence of Fact~\ref{fact:3nbhs}.) \\

   \noindent\textbf{Case 1.} The vertex $w_1$ does not exist. \\

   Most of our observations come from inclusion-exclusion: $|A|+|B|=|A\cup B|+|A\cap B|$.  Inequality \eqref{eq:h9case1:lb} comes from the fact that $N_G(v_0)\cap N_G(v_1)=\{v_2,w_2\}$.  Inequality \eqref{eq:h9case1:ub} comes from the fact that $N_G(v_0)\cap N_G(v_2)=\{v_1\}$.  Hence,
   \begin{align}
      \dg(v_0)+\dg(v_1) &\leq 1+2x_2 \label{eq:h9case1:lb} \\
      \dg(v_0)+\dg(v_2) &\leq 1+x_1 . \label{eq:h9case1:ub}
   \end{align}

   Solve for $x_2$ in each case, recalling that Lemma~\ref{lem:symm}(\ref{it:symmlgp}) gives that $\dg(v_2)=\frac{1-p-g}{1-p}+\frac{2p-1}{1-p}x_2$.  Inequality \eqref{eq:h9case1:lb} gives a lower bound for $x_2$ and inequality \eqref{eq:h9case1:ub} gives an upper bound:
   $$ \frac{1}{2}\left(\dg(v_0)+\dg(v_1)-1\right)\leq x_2\leq \frac{1-p}{2p-1}\left(1+x_1-\dg(v_0)-\frac{1-p-g}{1-p}\right) . $$

   Some simplification gives
   \begin{align*}
      2g &\geq \dg(v_0)+(2p-1)\dg(v_1)-2(1-p)x_1-2p+1 \\
      &= 2p\,\frac{1-p-g}{1-p}+\frac{2p-1}{1-p}x_0+\frac{2p^2-1}{1-p}x_1-2p+1 \\
      g &\geq \frac{1-p}{2}+\frac{2p-1}{2}x_0+\frac{2p^2-1}{2}x_1 .
   \end{align*}

   If $2p^2-1>0$ (i.e, $p>1/\sqrt{2}$), then $g>(1-p)/2$.  Otherwise, we use the bound $x_1\leq x_0$.
   \begin{align*}
      g &\geq \frac{1-p}{2}+\frac{2p-1}{2}x_0+\frac{2p^2-1}{2}x_0 \\
      &= \frac{1-p}{2}+(p^2+p-1)x_0 .
   \end{align*}

   If $p^2+p-1>0$ (i.e, $p>(\sqrt{5}-1)/2$), then $g>(1-p)/2$.  Otherwise, we use the bound from Corollary~\ref{cor:xbound}(\ref{it:xbound1}) that $x_0\leq g/p$.
   \begin{align*}
      g &\geq \frac{1-p}{2}+(p^2+p-1)x_0 \\
      &\geq \frac{1-p}{2}+(p^2+p-1)\frac{g}{p} \\
      &\geq \frac{p}{2(1+p)} .
   \end{align*}
   This is at least $\frac{p}{1+4p}$ as long as $p\geq 1/2$, a contradiction. \\

   \noindent\textbf{Case 2.} The vertex $w_1$ exists. \\

   Inequality \eqref{eq:h9case2:lb1} comes from the fact that $N_G(v_0)\cap N_G(v_1)=\{v_2,w_2\}$ and $\x(w_2)\leq\x(v_2)=x_2$.  Inequality \eqref{eq:h9case2:lb2} comes from the fact that $N_G(v_0)\cap N_G(v_2)=\{v_1,w_1\}$ and $\x(w_1)\leq\x(v_1)=x_1$.  Since it is the case that $\x(w_2)\leq x_2$ and $\x(w_1)\leq x_1$, we have
   \begin{align}
      \dg(v_0)+\dg(v_1) &\leq 1+2x_2  \label{eq:h9case2:lb1} \\
      \dg(v_0)+\dg(v_2) &\leq 1+2x_1 . \label{eq:h9case2:lb2}
   \end{align}
   Adding \eqref{eq:h9case2:lb1} and \eqref{eq:h9case2:lb2} gives
   \begin{align}
      2\dg(v_0)+\dg(v_1)+\dg(v_2) &\leq 2+2(x_1+x_2) \nonumber \\
      2\dg(v_0)-\frac{2g}{1-p} &\leq \frac{3-4p}{1-p}(x_1+x_2) . \label{eq:h9case2:lb}
   \end{align}

   If $p\geq 3/4$, then \eqref{eq:h9case2:lb} gives that $2\dg(v_0)-\frac{2g}{1-p}\leq 0$.  By Lemma~\ref{lem:symm}(\ref{it:symmlgp}) we can substitute for $\dg(v_0)$ and conclude that $\frac{p-g}{p}-\frac{g}{1-p}<0$.  Consequently, $g>p(1-p)\geq (1-p)/2$, a contradiction.  Thus, we assume $p<3/4$.

   Next, we use Fact~\ref{fact:h96vert} to conclude that $v_0$ is the only common gray neighbor of $v_1$ and $v_2$.
   \begin{fact}\label{fact:h96vert}
      Let $p\geq 1/2$ and $K$ be a $p$-core with white vertices and black or gray edges.  Let $a_0,a_1,a_2,b_0,b_1,b_2\in\vk$ such that $\{a_0,a_1,a_2\}$ is a gray triangle and $\{b_i,a_j\}$ is a gray edge as long as $i$ and $j$ are distinct.  Then, $H_9\mapsto K$.
   \end{fact}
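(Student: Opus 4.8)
\medskip
\noindent\textbf{Proof proposal.} The plan is to exhibit a single explicit embedding $\varphi\colon V(H_9)\to\{a_0,a_1,a_2,b_0,b_1,b_2\}$ and to verify it directly, in the same spirit as the map used in the proof of Fact~\ref{fact:3nbhs}. The first step is to reduce what must be checked. Since $p\geq 1/2$ and $K$ is a $p$-core whose vertices are white and whose edges are black or gray, Lemma~\ref{lem:cores}(\ref{it:lgpcore}) says $K$ has no white edges, so \emph{every} edge of $K$ lies in $\ebk\cup\egk$ and can receive an edge of $H_9$. Hence the only constraints on $\varphi$ are that (i) no edge of $H_9$ has both endpoints sent to one vertex (equivalently, each fibre of $\varphi$ is independent in $H_9$), and (ii) every \emph{non-edge} of $H_9$ is sent to a single vertex or to a gray edge. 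In particular the unspecified edges $a_ib_i$ and $b_ib_j$, which may be black, are harmless for edges of $H_9$; I only need each non-edge of $H_9$ to be carried by one of the nine \emph{guaranteed} gray edges, the triangle edges $a_ia_j$ and the edges $a_ib_j$ with $i\neq j$.

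The second step uses the structure of $H_9$. Relative to its triangle $\{0,3,6\}$, the remaining six vertices induce a $6$-cycle whose two bipartition classes are exactly the maximum independent sets $\{1,4,7\}$ and $\{2,5,8\}$, and each triangle vertex is joined to four consecutive cycle vertices. I propose to send the three \emph{antipodal} pairs of this hexagon to the $a$-vertices and the three triangle vertices to the $b$-vertices:
\[
\{1,5\}\arrows a_0,\quad \{2,7\}\arrows a_1,\quad \{4,8\}\arrows a_2,\qquad 0\arrows b_1,\quad 3\arrows b_0,\quad 6\arrows b_2 .
\]
Each two-element fibre is a non-edge of $H_9$, so (i) holds automatically. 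For (ii) I would run through the fifteen non-edges: those inside $\{1,5\},\{2,7\},\{4,8\}$ collapse to a point; the remaining non-edges among $\{1,2,4,5,7,8\}$ land on triangle edges $a_ia_j$; and the six non-edges meeting $\{0,3,6\}$ land on edges $a_ib_j$ with $i\neq j$ (for instance the non-edge $\{0,5\}$ maps to $b_1a_0$ and $\{3,7\}$ maps to $b_0a_1$). This verification is routine once the map is written down, and it concludes that $H_9\arrows K$.

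The main obstacle is choosing the map so as to avoid the potentially black edges entirely. A naive symmetric attempt---sending one maximum independent triple to a single $b$-vertex and the other to a second $b$-vertex---fails, because the $b$-vertices form a (possibly) black triangle and each $a_ib_i$ may be black; moreover each of $a_0,a_1,a_2$ has a non-neighbour among $\{1,4,7\}$, so once $\{0,3,6\}$ occupies the $a$-vertices no vertex is gray-adjacent to all three $a$'s and $\{1,4,7\}$ cannot be collapsed to one point. The resolution is precisely the antipodal splitting above, which funnels every non-edge through the gray edges $a_ia_j$ and $a_ib_j$ with $i\neq j$. Abstractly, the correct assignment is forced by matching a $6$-vertex ``conflict graph'' derived from $H_9$---whose edges record which pairs of fibres must be gray-adjacent---isomorphically onto the gray graph on $\{a_0,a_1,a_2,b_0,b_1,b_2\}$; locating that isomorphism is the one genuinely nontrivial point.
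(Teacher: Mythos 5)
Your proposal is correct and takes the same approach as the paper: the paper's proof simply exhibits the explicit embedding $2,7\arrows a_0$; $1,5\arrows a_1$; $4,8\arrows a_2$; $0\arrows b_0$; $3\arrows b_1$; $6\arrows b_2$, and your map is exactly this one after swapping the index pair $0\leftrightarrow 1$ (on both $a$'s and $b$'s), a relabeling under which the hypothesis is invariant. Your additional observations---that only the non-edges of $H_9$ need checking since every edge of $K$ is black or gray, and that the $b$-assignment is forced so each triangle vertex avoids the $a$'s carrying its two cycle non-neighbors---are sound and merely make explicit what the paper leaves to the reader.
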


   \begin{proof}
   The following map shows the embedding:
   $$ \begin{array}{lclcl}
      2,7\rightarrow a_0 & \qquad & 1,5\rightarrow a_1 & \qquad & 4,8\rightarrow a_2 \\
      0\rightarrow b_0 & \qquad & 3\rightarrow b_1 & \qquad & 6\rightarrow b_2 .
      \end{array} $$
   \end{proof}

   If $v_1$ and $v_2$ have a gray neighbor in $K$ other than $v_0$, call it $w_0$ and observe that by setting $a_i:=v_i$ and $b_i:=w_i$ for $i=0,1,2$, Fact~\ref{fact:h96vert} would imply that $H_9\mapsto K$.

   Since $v_0$ is the only common gray neighbor of $v_1$ and $v_2$
   \begin{align}
      \dg(v_1)+\dg(v_2) &\leq 1+x_0 \nonumber \\
      \frac{2p-1}{1-p}(x_1+x_2) &\leq 1+x_0-2\frac{1-p-g}{1-p} . \label{eq:h9case2:ub}
   \end{align}

   Inequality \eqref{eq:h9case2:lb} gives a lower bound for $x_1+x_2$ and inequality \eqref{eq:h9case2:ub} gives an upper bound.  Recall that Lemma~\ref{lem:symm}(\ref{it:symmlgp}) gives that $\dg(v)=\frac{1-p-g}{1-p}+\frac{2p-1}{1-p}\x(v)$ for any vertex $v\in V(K)$.  Recall that we assume $p<3/4$.
   $$ \frac{1-p}{3-4p}\left(2\dg(v_0)-\frac{2g}{1-p}\right)\leq x_1+x_2\leq\frac{1-p}{2p-1}\left(1+x_0-2\frac{1-p-g}{1-p}\right) . $$

   Some simplification gives
   $$ 2(2p-1)\left((1-p)\dg(v_0)-g\right)\leq (3-4p)\left((1-p)(1+x_0)-2(1-p-g)\right) $$
   and a further substitution of $\dg(v_0)=\frac{1-p-g}{1-p}+\frac{2p-1}{1-p}\x(v_0)$ and simplification gives
   $$ g\geq\frac{1-p}{2}+\frac{4p^2-p-1}{2}x_0 . $$

   If $4p^2-p-1>0$ (i.e, $p>(\sqrt{17}+1)/8$), then $g>(1-p)/2$.  Otherwise, we use the bound $x_0\leq g/p$ from Corollary~\ref{cor:xbound}(\ref{it:xbound1}).
   \begin{align*}
      g &\geq \frac{1-p}{2}+\frac{4p^2-p-1}{2}\left(\frac{g}{p}\right) \\
      &\geq \frac{p}{1+4p} ,
   \end{align*}
   a contradiction.

   Therefore, for $p\in [1/2,1]$ and in each case, $g\geq\min\left\{p/(1+4p),(1-p)/2\right\}$.  Combining this with the fact that $g\geq p/3$, for $p\in [0,1/2]$, this concludes the proof of the lower bound.  Consequently,
   $$ \ed_{\forb(H_9)}(p)=\min\left\{p/3,p/(1+4p),(1-p)/2\right\} . $$
   This concludes the proof of Theorem~\ref{thm:h9}.

\section{Thanks}
\label{sec:conc}
I would like to thank Maria Axenovich and J\'ozsef Balogh for conversations which have improved the results.  Thanks to Andrew Thomason for some useful conversations and for directing me to \cite{MT}.  Thanks also to Tracy McKay for conversations that helped deepen my understanding and to Doug West for answering my question about clique-stars.

A very special thanks to Ed Marchant for finding an error in the original formulation of Theorem~\ref{thm:split}.

I am indebted to anonymous referees whose detailed comments resulted in correcting some errors and provided a much better exposition of the proofs.

Figures are made by Mathematica and WinFIGQT.

\end{document}